\numberwithin{equation}{section}
\theoremstyle{plain}
\newtheorem{theorem}{Theorem}[section]
\newtheorem{proposition}[theorem]{Proposition}
\newtheorem{lemma}[theorem]{Lemma}   
\newtheorem{corollary}[theorem]{Corollary}
\newtheorem*{question*}{Question}
\theoremstyle{definition}
\newtheorem{notation}[theorem]{Notation}
\newtheorem{remark}[theorem]{Remark}
\newcommand{\SV}{\mathrm{SV}}
\newcommand{\rd}[1]{\left\lfloor #1\right\rfloor}
\newcommand{\ru}[1]{\left\lceil #1\right\rceil}
\newcommand{\oo}{\mathcal{O}}
\newcommand{\p}{\mathbb{P}}
\newcommand{\bbP}{\mathbb{P}}
\newcommand{\Q}{\mathbb{Q}}
\newcommand{\bbQ}{\mathbb{Q}}
\newcommand{\N}{\mathbb{N}}
\newcommand{\bbN}{\mathbb{N}}
\newcommand{\bfa}{\mathbf{a}}
\newcommand{\bfb}{\mathbf{b}}
\newcommand{\bfn}{\mathbf{n}}
\newcommand{\bfd}{\mathbf{d}}
\title{Non-defectivity of Segre-Veronese varieties}
\author[H. Abo]{Hirotachi Abo}
\address{Department of Mathematics, University of Idaho, Moscow, Idaho 83844--1103, United States of America}
\email{abo@uidaho.edu}
\author[M. C. Brambilla]{Maria Chiara Brambilla}
\address{ Universit\`a Politecnica delle Marche, Via Brecce Bianche, 60131 Ancona, Italy} 
\email{m.c.brambilla@univpm.it}
\author[F. Galuppi]{Francesco Galuppi}
\address{Faculty of Mathematics, Informatics, and Mechanics, University of Warsaw, Banacha 2, 02-097 Warsaw, Poland}
\email{galuppi@mimuw.edu.pl (ORCID 0000-0001-5630-5389)}
\author[A. Oneto]{Alessandro Oneto}
\address{Department of Mathematics, Universit\`a di Trento, Via Sommarive 14, 38123 Povo (Trento), Italy}
\email{alessandro.oneto@unitn.it}
\begin{document}
\thanks{2020 \emph{Mathematics Subject Classification}. 14N05, 14N07}

\begin{abstract}We prove that 
Segre-Veronese varieties are never secant defective 
if each %entry of the multi-degree 
degree is at least three. The proof is by induction on the number of factors, degree and dimension. %Our main technical tool is the differentiable Horace lemma.
As a corollary, we give an almost optimal non-defectivity result for Segre-Veronese varieties with one degree equal to one and all the others at least three.

% ---

% As an application, we give a further non-defectivity result for some Segre-Veronese varieties with lower degrees.

% \chiara{As an application, we give bounds for non-defectivity for Segre-Veronese varieties such that
%  one degree is one and each other %entry of the multi-degree 
% degree is at least three.}\\
% \chiara{chiara: I am happy also with the black abstract}
\end{abstract}

\maketitle

\section{Introduction}
A {\it Segre-Veronese variety}, the embedding of a multi-projective space by a very ample line bundle, parameterizes the rank-one partially symmetric tensors, and the compactification of the space parameterizing  those with partially symmetric rank at most $m$ is called the $m$-th secant variety of the Segre-Veronese variety. This paper concerns the problem of classifying the so-called \textit{defective} secant varieties of Segre-Veronese varieties, the ones with dimension smaller than expected. This problem is very classical and has its roots in XIX century algebraic geometry (see \cite{bernardi2018hitchhiker}). On the other hand, it is closely related to partially symmetric tensor rank, partially symmetric tensor border rank, simultaneous rank, and partially symmetric tensor decompositions, as well as their uniqueness, that are relevant topics to many branches of modern applied sciences%, statistics, and computer science
, see \cite{landsberg2011tensors}.
Hence, it has the potential to impact a variety of areas, including mathematics, computer science, and statistics. 

Our goal is to establish non-defectivity for a large family of Segre-Veronese varieties. The simplest examples of Segre-Veronese varieties are Veronese varieties, whose defectivity is completely understood %classification of defective secant varieties is 
due to the celebrated theorem by J.~Alexander and A.~Hirschowitz %\ale{They perfected an inductive method which allowed to show that the list of classically known defective cases in small dimensions and degrees was complete} 
\cite{AH}. Beyond Veronese varieties, this classification problem, however, is still far from complete. 

Some cases are better understood than the others. For example, the conjecturally complete list of defective secant varieties for Segre-Veronese varieties with two factors was suggested by M.~C.~Brambilla and H.~Abo in \cite{AboBra13}. Significant progress towards this conjecture was made by F.~Galuppi and A.~Oneto in \cite{GO}: they proved that if the bi-projective space is embedded by a linear system of degree at least three in both factors, then
% its linear system of a sufficiently large multi-degree, then 
its secant varieties are all non-defective. In this paper, we extend this result to an arbitrary number of factors.
% their result to Segre-Veronese varieties with more than two factors. 

M.~V.~Catalisano, A.~T.~Geramita, and A.~Gimigliano carried out the first systematic study of the secant varieties of Segre-Veronese varieties in two  papers \cite{CatGerGim05,CatGerGim08}. In these papers, they discovered many defective cases, including \textit{unbalanced cases} (where one of the factors of the multi-projective space has a much larger dimension than the rest). Several of these defective cases were later generalized by H.~Abo and M.~C.~Brambilla \cite{ABnewexamples}, as well as A.~Laface, A.~Massaranti, and R.~Richter \cite{LMR22}.

Regarding the secant non-defectivity, A.~Laface and E. Postinghel in
% , in their 2013 paper 
\cite{P1P1P1} employed toric approaches to show that the secant varieties of Segre-Veronese varieties of an arbitrary number of copies of the projective line are never defective. E. Ballico \cite{ballico} and  E.~Ballico, A.~Bernardi, and T.~Ma\'ndziuk \cite{ballico2023tensoring} proved non-defectivity for more families of Segre-Veronese varieties, with some assumptions on the dimensions. 

% In 2019, 
C.~Araujo, A.~Massarenti, and R.~Rischter \cite{AMR} developed a new approach using osculating projections and obtained an asymptotic bound under which the secant varieties of Segre-Veronese varieties always have the expected dimensions. Their bound was improved by A.~Laface, A.~Massarenti, and R.~Rischter \cite{LMR22}.
% in 2022. 

Very recently, A. Taveira Blomenhofer and A. Casarotti %, in their preprint
\cite{blomenhofer2023nondefectivity} significantly improved the 
% Laface-Massaranti-Rischter 
bound from \cite{LMR22} showing that most secant varieties of Segre-Veronese varieties are not defective% \ale{by generalizing a previous result by B. Ådlandsvick \cite{adlandsvik1988varieties} to a very general family of algebraic varieties which includes Segre-Veronese and most of varieties parametrizing tensors.}
.
However, there is still a range of values
 of $m$ 
for which the non-defectivity of the $m$-th secant variety of a Segre-Veronese variety is not known. As the longevity of the classification problem of the defective cases suggests, making this final stretch is difficult. The primary goal of this paper is to fill this gap for Segre-Veronese varieties embedded with degree at least three in all factors.
% moderately low multi-degrees. 
In the remaining part of this introduction, we introduce basic notation and state our main results.

For given $k$-tuples $\bfn = (n_1, n_2, \dots, n_k)$ and $\bfd = (d_1, d_2, \dots, d_k)$ of positive integers, we write $\mathbb{P}^\bfn$ for $\mathbb{P}^{n_1} \times \mathbb{P}^{n_2} \times \cdots \times \mathbb{P}^{n_k}$ and $\mathrm{SV}^\bfd_\bfn$ for the Segre-Veronese variety obtained by embedding $\mathbb{P}^\bfn$ by the morphism associated with its complete linear system $|\mathscr{O}_{\mathbb{P}^\bfn}(\bfd)|$. The closure of the union of secant $(m-1)$-planes to $\mathrm{SV}^\bfd_\bfn$ is called the $m$-th secant variety of $\mathrm{SV}^\bfd_\bfn$ and denoted by $\sigma_m(\mathrm{SV}^\bfd_\bfn)$. We say that it is non-defective if it is not $m$-defective for any positive integer $m$, that is, if $\sigma_m(\mathrm{SV}^\bfd_\bfn)$ has dimension equal to the expected one defined by a na\"ive parameter count, see \Cref{sec:tools} for explicit definitions. 
\begin{theorem}
\label{thm: ultimate goal}
    Let $k \geq 3$. If $d_1, d_2, \dots, d_k \geq 3$, then $\mathrm{SV}^\bfd_\bfn$ is not defective. 
\end{theorem}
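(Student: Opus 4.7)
By Terracini's lemma, proving non-defectivity of $\sigma_m(\mathrm{SV}^{\bfd}_{\bfn})$ is equivalent to showing that for every $m \geq 1$ the linear system on $\mathbb{P}^{\bfn}$ of sections of $\oo(\bfd)$ vanishing on $m$ general double points has the expected dimension. My plan is a simultaneous induction on the number of factors $k$, the total dimension $|\bfn|=n_1+\cdots+n_k$, and the total degree $|\bfd|=d_1+\cdots+d_k$. The base case $k=2$ is the theorem of Galuppi--Oneto \cite{GO}, while $k=1$ is Alexander--Hirschowitz \cite{AH}. Moreover, if some $n_i=0$ the corresponding factor is trivial and $\mathrm{SV}^{\bfd}_{\bfn}$ coincides with a Segre--Veronese on $k-1$ factors, which is already covered by the induction on $k$. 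So I may assume $k\geq 3$ and all $n_i\geq 1$.

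For the inductive step I would single out a factor, say the $k$-th, and choose a hyperplane in $\mathbb{P}^{n_k}$ so as to obtain a divisor $H=\mathbb{P}^{n_1}\times\cdots\times\mathbb{P}^{n_{k-1}}\times\mathbb{P}^{n_k-1}\subset\mathbb{P}^{\bfn}$. Letting $Z$ be a union of $m$ general double points and specializing $a$ of them onto $H$, the Castelnuovo restriction sequence
\begin{equation*}
0\to \mathcal{I}_{\mathrm{Res}_H Z}(\bfd-e_k)\to \mathcal{I}_Z(\bfd)\to \mathcal{I}_{\mathrm{Tr}_H Z,\,H}(\bfd)\to 0
\end{equation*}
splits the problem into two pieces. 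The \emph{trace system} on $H$ carries $a$ double points in general position with unchanged multidegree $\bfd$ but with $|\bfn|$ smaller by one, so it is handled by induction on the total dimension. The \emph{residual system} on $\mathbb{P}^{\bfn}$ carries $a$ simple points on $H$ together with $m-a$ double points off $H$ and has multidegree $\bfd-e_k$, so provided $d_k\geq 4$ it is handled by induction on the total degree. Choosing $a$ so that the numerical expected dimensions on both sides match the expected dimension of the original system is a routine count using the exact sequence.

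The main obstacle is the case $d_k=3$: the residual multidegree has a component equal to $2$, which is outside the inductive scope. The natural remedy is the differential Horace method of Alexander--Hirschowitz: rather than fully specializing each double point onto $H$, one replaces it by partial jet data whose trace on $H$ is a fat point but whose residual retains a tangent direction transverse to $H$, so that the resulting residual system can still be interpreted within the inductive hypothesis without ever producing a degree strictly less than $3$. The delicate combinatorial part is then to select, for each $(\bfn,\bfd,m)$, the factor to degenerate in, the number $a$ of specialized points, and the precise decomposition of each specialized double point so that the numerics on both sides fall inside the inductive hypothesis. I expect that a handful of low-dimensional sporadic configurations---in particular the almost-balanced cases with all $d_i=3$---will have to be verified separately by explicit computation to launch the induction cleanly.
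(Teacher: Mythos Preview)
Your overall architecture---induction on $k$, on the dimension, and on the degree via Horace-type restriction, with $k\le 2$ handled by \cite{GO,AH}---is exactly the paper's. The gap is in your handling of the boundary case $d_k=3$.

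Your description of the differential Horace method is not accurate. That method does \emph{not} avoid producing lower degrees; on the contrary, in the form the paper uses (\Cref{thm: differential horace}), it trades non-defectivity of $\SV_{\bfn}^{\bfd}$ at the critical $r$ for non-defectivity of $\SV_{\bfn(1)}^{\bfd}$, of $\SV_{\bfn}^{\bfd(1)}$ at $r-s_r$, and of $\SV_{\bfn}^{\bfd(2)}$ at $r-s_r-\epsilon_r$. So when $d_1=3$ you must still say something about multidegrees with a component equal to $2$ and to $1$, which are outside your inductive hypothesis. The paper's escape is not to redesign the degeneration, but to observe that the values $r-s_r$ and $r-s_r-\epsilon_r$ land far from the critical values of those smaller systems (Lemmas~\ref{lemma:d1=3_subabundant_BC} and~\ref{lemmaBC}), so that the Blomenhofer--Casarotti bound (\Cref{BC}), which holds for \emph{arbitrary} multidegrees away from the critical range, closes the loop. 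This external input is the crux of the argument and is entirely absent from your plan; ``sporadic computations'' will not substitute for it, since the problematic range is a full family in $\bfn$.

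There is a second base case you do not address: when the smallest factor has $n_1=1$, the numerical estimates needed for \Cref{thm: differential horace} fail, and the paper instead invokes Ballico's theorem (\Cref{pro: ballico adds a factor}) to add a $\mathbb{P}^1$ factor to an already non-defective Segre--Veronese. Your reduction of $n_k$ to $0$ only collapses a factor; it does not cover this $n_1=1$ boundary, which needs separate treatment.
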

The proof of this theorem, presented in \Cref{sec:base}, is an application of the \textit{differential Horace method}, which enables us to show the secant non-defectivity of a Segre-Veronese variety by induction on dimension and degree.
% , and the number of factors. 
This type of approach often leads to a complicated nested induction. The significance of this paper is to overcome this complication and to give a clean proof.% by applying induction on the number of factors $k$.%: \ale{in our case, the process falls into cases that are known by the aforementioned result by} 
% The proof uses the result by A. Taveira Blomenhofer and A. Casarotti~\cite{blomenhofer2023nondefectivity}. 

As a consequence of \Cref{thm: ultimate goal}, we deduce an almost optimal non-defectivity result for Segre-Veronese variety having one factor embedded in degree $1$ and all the others at least three.
% with a lower multi-degree is not defective.
\begin{theorem}
\label{thm:simultaneous_partialresult}
Let $k\ge 2$, let $\bfn = (n_1,n_2,\dots,n_k), \bfd = (d_1, d_2, \dots, d_k)$ be $k$-tuples of positive integers, let $|\bfn| = \sum_{i=1}^k n_i$, and let $N_{\bfn,\bfd} = \prod_{i=1}^k {n_i+d_i \choose n_i}$. If $d_1, d_2, \dots, d_k \ge 3$ and if 
    \[  
       m \le (n_0+1)\left\lfloor \frac{N_{\bfn,\bfd}}{n_0+|\bfn|+1}\right\rfloor  \quad \text{ or } \quad m \ge (n_0+1)\left\lceil \frac{N_{\bfn,\bfd}}{n_0+|\bfn|+1}\right\rceil,
    \]
    then $\SV_{(n_0,\bfn)}^{(1,\bfd)}$ is not $m$-defective.
\end{theorem}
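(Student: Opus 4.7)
The strategy is to reduce \Cref{thm:simultaneous_partialresult} to the non-defectivity of every secant variety of the smaller Segre-Veronese $X := \SV^{\bfd}_{\bfn}$, which follows from \Cref{thm: ultimate goal} when $k \ge 3$ and from \cite{GO} when $k=2$. Let $W := H^0(\mathbb{P}^{\bfn},\mathcal{O}(\bfd))^*$, so that $\dim W = N_{\bfn,\bfd}$ and $X \subset \mathbb{P}(W)$, and let $V_0$ be an $(n_0{+}1)$-dimensional vector space realising $Y := \SV^{(1,\bfd)}_{(n_0,\bfn)} \subset \mathbb{P}(V_0 \otimes W)$. The product structure of $Y$ makes the affine tangent space at $[v\otimes w]\in Y$ decompose as
\[
\hat T_{[v\otimes w]} Y \;=\; V_0 \otimes \langle w \rangle \;+\; \langle v \rangle \otimes \hat T_w X,
\]
so by Terracini's lemma it suffices to compute the span of $m$ such tangent spaces at a cleverly chosen configuration of $m$ points of $Y$.

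The specialization I would use is the ``slicing'' one: pick a basis $v_0,\dots,v_{n_0}$ of $V_0$ and, for $m=(n_0{+}1)q$, choose general points $w_{1j},\dots,w_{qj}\in X$ for each $j\in\{0,\dots,n_0\}$, yielding the $m$ points $p_{ij}=[v_j\otimes w_{ij}]\in Y$. Set $W_j := \langle w_{1j},\dots,w_{qj}\rangle \subset W$ and $\hat T_j := \sum_i \hat T_{w_{ij}} X$, the affine tangent space of $\sigma_q(X)$ at a generic point. Through the decomposition $V_0\otimes W = \bigoplus_j \langle v_j\rangle\otimes W$, the span of the $m$ tangent spaces to $Y$ reads $\bigoplus_j \langle v_j\rangle\otimes S_j$ with $S_j := \hat T_j + \sum_{k\neq j} W_k$. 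Non-defectivity of $\sigma_q(X)$ gives $\dim \hat T_j = \min(q(|\bfn|+1),\,N_{\bfn,\bfd})$, and combining this with a genericity argument for the external points $\{w_{ik}\}_{k\neq j}$ yields
\[
\dim S_j \;=\; \min\bigl(\dim \hat T_j + n_0 q,\,N_{\bfn,\bfd}\bigr) \;=\; \min\bigl(q(|\bfn|+n_0+1),\,N_{\bfn,\bfd}\bigr).
\]
Summing over $j$ gives $\dim \sigma_{(n_0{+}1)q}(Y) = (n_0{+}1)\min(q(n_0+|\bfn|+1),\,N_{\bfn,\bfd})$. Choosing $q = \lfloor N_{\bfn,\bfd}/(n_0+|\bfn|+1)\rfloor$ matches the expected subabundant dimension $m(\dim Y+1)$, while $q = \lceil N_{\bfn,\bfd}/(n_0+|\bfn|+1)\rceil$ fills the ambient space.

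To extend this to every $m$ in the two ranges, I would invoke monotonicity: adding a general tangent space to a Terracini computation contributes at most $\dim Y + 1 = n_0+|\bfn|+1$ new directions, so non-defectivity at the subabundant threshold $(n_0{+}1)\lfloor N_{\bfn,\bfd}/(n_0+|\bfn|+1)\rfloor$ forces the expected dimension for every smaller $m$, and the inclusion $\sigma_m(Y)\subset\sigma_{m+1}(Y)$ lets the filling at the superabundant threshold $(n_0{+}1)\lceil N_{\bfn,\bfd}/(n_0+|\bfn|+1)\rceil$ propagate to every larger $m$.

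The main obstacle is precisely the claim $\dim S_j = \min(\dim \hat T_j + n_0 q,\,N_{\bfn,\bfd})$, namely that $\hat T_j$ and the span of the $n_0 q$ external points are in linear general position inside $W$. I would establish this by first fixing $\{w_{ij}\}_i$ so that $\hat T_j$ attains its generic dimension, and then varying $\{w_{ik}\}_{k\neq j}$ generically on the non-degenerate variety $X\subset \mathbb{P}(W)$: since no proper subspace of $W$ contains a general point of $X$, an inductive step-by-step choice makes the external points contribute exactly $\min(n_0 q,\,N_{\bfn,\bfd}-\dim \hat T_j)$ new directions. The lower-semicontinuity of the rank of the corresponding Terracini linear map then transports the conclusion from this special configuration to a generic choice of all the $w_{ij}$, completing the argument.
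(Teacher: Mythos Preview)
Your proposal is correct and follows essentially the same strategy as the paper: reduce to the non-defectivity of $X=\SV^{\bfd}_{\bfn}$ (via \Cref{thm: ultimate goal} for $k\ge 3$ and \cite{GO} for $k=2$), then analyse $Y=\bbP^{n_0}\times X$ via Terracini's lemma and a specialization of the first-factor points to a basis of $V_0$, so that the Terracini span breaks into $n_0{+}1$ slices $\langle v_j\rangle\otimes S_j$ with $S_j=\hat T_j+\sum_{k\neq j}W_k$.

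The only difference is packaging: you perform the full basis decomposition in one shot, whereas the paper reaches the same endpoint by a two-at-a-time \emph{splitting lemma} (writing $V_0=V_1\oplus V_2$ and inducting on $n_0$), encoding the intermediate steps through properties $T(n_0,m,t)$ that track both tangent spaces and ``loose'' point conditions $V\otimes w_i'$. Your direct slicing is slightly cleaner for this specific statement; the paper's inductive formulation has the advantage that the base case $T(0,a_*,n_0a_*)$ isolates exactly your ``main obstacle'' (generic points of the non-degenerate $X$ impose independent conditions on top of $\hat T_j$) as a clean standalone fact. Either way, the key input is identical and your genericity-plus-semicontinuity argument for that step is correct.
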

The proof, presented in \Cref{sec:segre_induction}, is based on an inductive method which allows to deduce non-defectivity results for a Segre product $\bbP^{n}\times X$ from the non-defectivity of the algebraic variety $X$,
see \Cref{pro:segre induction}. It is worth noting that \Cref{thm:simultaneous_partialresult}
% presented in \Cref{thm: ultimate goal}
is stronger than \cite[Theorem~4.8]{blomenhofer2023nondefectivity} for these specific multidegrees, see \Cref{rem:multiple} for more details. 

While the rank of a tensor tells us about the length of a minimal decomposition as a sum of rank-one elements, identifiability is the uniqueness of such decomposition. For applied purposes it is very important to know when the Segre-Veronese variety is identifiable, namely when the general partially symmetric tensor has a unique decomposition. Thanks to \cite[Theorem 1.5]{MMidentifiability}, the non-defectivity of a variety has direct consequences on its identifiability, so we immediately get the following corollaries of \Cref{thm: ultimate goal,thm:simultaneous_partialresult}.
% state the following corollary of \Cref{thm: ultimate goal} and \Cref{thm:simultaneous_partialresult}.
\begin{corollary}
\label{corollary: identifiability}
Let $k\ge 2$, let $\bfn,\bfd \in \bbN^k$ be tuples of positive integers with $d_1, d_2, \dots, d_k\ge 3$, and let $N_{\bfn,\bfd} = \prod_{i=1}^k {n_i+d_i \choose n_i}$.  
\begin{enumerate}
\item If $m(|\bfn|+1)\le N_{\bfn,\bfd}$, 
then $\SV_\bfn^\bfd$ is $(m-1)$-identifiable.
\item If $m \le (n_0+1)\left\lfloor \frac{N_{\bfn,\bfd}}{n_0+|\bfn|+1}\right\rfloor$, then $\SV_{(n_0,\bfn)}^{(1,\bfd)}$ is $(m-1)$-identifiable. 
\end{enumerate}
\end{corollary}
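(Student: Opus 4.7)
The plan is to invoke the Massarenti--Mella identifiability criterion \cite[Theorem~1.5]{MMidentifiability}, which, for an irreducible non-degenerate projective variety, upgrades non-$m$-defectivity in the sub-generic range to $(m-1)$-identifiability. Each of the two parts then reduces to two checks: first, that the relevant Segre--Veronese variety is not $m$-defective, and second, that the numerical bound on $m$ places us in the sub-generic range.

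For part~(1), the hypothesis $m(|\bfn|+1)\le N_{\bfn,\bfd}$ is exactly the statement that the expected dimension $m(|\bfn|+1)-1$ of $\sigma_m(\SV_\bfn^\bfd)$ does not exceed $\dim\mathbb{P}^{N_{\bfn,\bfd}-1}$, placing $m$ in the sub-generic range. Non-$m$-defectivity of $\SV_\bfn^\bfd$ for such $m$ is supplied by \Cref{thm: ultimate goal} when $k\ge 3$, and by the bi-projective theorem of Galuppi--Oneto \cite{GO} (recalled in the introduction) when $k=2$. Applying \cite[Theorem~1.5]{MMidentifiability} yields the desired $(m-1)$-identifiability.

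For part~(2), the bound $m \le (n_0+1)\lfloor N_{\bfn,\bfd}/(n_0+|\bfn|+1)\rfloor$ is precisely the left branch of the hypothesis of \Cref{thm:simultaneous_partialresult}, which provides the non-$m$-defectivity of $\SV_{(n_0,\bfn)}^{(1,\bfd)}$. Using the elementary inequality $\lfloor N_{\bfn,\bfd}/(n_0+|\bfn|+1)\rfloor \le N_{\bfn,\bfd}/(n_0+|\bfn|+1)$, one immediately obtains $m(n_0+|\bfn|+1) \le (n_0+1)N_{\bfn,\bfd}$, which is the sub-generic condition for a variety of dimension $n_0+|\bfn|$ sitting in $\mathbb{P}^{(n_0+1)N_{\bfn,\bfd}-1}$. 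A second application of \cite[Theorem~1.5]{MMidentifiability} then finishes the proof.

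The main point requiring care is to verify that the boundary behaviour of the floor-type bounds does not conflict with the hypotheses of \cite[Theorem~1.5]{MMidentifiability}, which typically exclude the case in which $m$ coincides with the generic rank (where uniqueness may fail even in the absence of defectivity). Because \Cref{thm: ultimate goal,thm:simultaneous_partialresult} supply non-defectivity across the entire range under consideration, the only verification needed is the numerical one sketched above; no new geometric input is required beyond these two theorems and \cite[Theorem~1.5]{MMidentifiability}.
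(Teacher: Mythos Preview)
Your proposal is correct and follows essentially the same approach as the paper: the paper gives no detailed proof, stating only that the corollary follows immediately from \Cref{thm: ultimate goal,thm:simultaneous_partialresult} together with \cite[Theorem~1.5]{MMidentifiability}, which is precisely the strategy you spell out. Your extra care in invoking \cite{GO} for the $k=2$ case and in checking the sub-generic numerical conditions simply makes explicit what the paper leaves implicit.
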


% Despite \Cref{thm: ultimate goal} and \Cref{thm:simultaneous_partialresult}, the classification of defective Segre-Veronese varieties is not complete yet, because we are missing the cases in which one of the degrees is 1 or 2. Indeed, besides unbalanced cases, there are several known defective examples in the literature. However, all of them appear when the number of factors is 4 or less. This motivates us to close the section with the following  open problem.

% \chiara{In order to complete the classification of Segre-Veronese varieties, it remains to consider  the cases where the multidegree contains a 1 or a 2. Unfortunately in these cases there are several known defective examples in the literature. However, all of the defective cases, besides the unbalanced ones, appear when the number of factors is four or less. This motivates us to close the section with the following  open problem.}

In order to complete the classification of Segre-Veronese varieties, it remains to solve the cases in which one of the degrees is $1$ or $2$. A major difficulty here is that, besides the unbalanced ones, several defective cases are known, and it is complicated to shape a general inductive strategy that avoids them. We underline that all known balanced defective cases appear when the number of factors is four or less. For this reason, we want to explicitly draw attention to the following question. 
\begin{question*}
    Is it true that the only defective cases for Segre-Veronese varieties with at least five factors are the unbalanced cases?
\end{question*}

% \begin{problem}
%     Prove or disprove that there are no defective Segre-Veronese varieties with five or more factors, except for the unbalanced cases. 
% \end{problem}

During the final part of the preparation of the present manuscript, E.~Ballico privately informed us that he independently obtained a result similar to \Cref{thm: ultimate goal}% but, at the date of this submission, it has not appeared yet
.

\subsection*{Acknowledgements}
This project started during the semester AGATES in Warsaw,
and it is partially supported by  the Thematic Research Programme Tensors: geometry, complexity and quantum entanglement, University of Warsaw, Excellence Initiative – Research University and the Simons Foundation Award No. 663281 granted to the Institute of Mathematics of the Polish Academy of Sciences for the years 2021-2023.  

We would also like to thank the University of Trento and the Universit\`a Politecnica delle Marche, Ancona, for hosting us during our research visits.

M.~C.~Brambilla and A.~Oneto are members of INdAM-GNSAGA and have been funded by the European Union under NextGenerationEU. PRIN 2022 Prot. n. 2022ZRRL4C and Prot. n. 20223B5S8L, respectively. Views and opinions expressed are however those of the author(s) only and do not necessarily reflect those of the European Union or European Commission. Neither the European Union nor the granting authority can be held responsible for them.

\begin{center}
    \includegraphics[width=\textwidth]{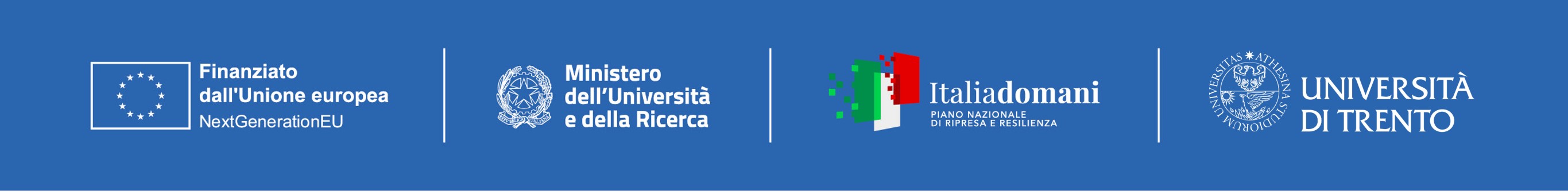}
\end{center}

\section{Tools and Background}\label{sec:tools}
We work over an algebraically closed field $\Bbbk$ of characteristic zero. 

Given an algebraic variety $X \subset \bbP^N$, the \emph{$m$-th secant variety}
\[
    \sigma_m(X) = \overline{\bigcup_{x_1,\ldots,x_m\in X} \langle x_1,\ldots,x_m \rangle} \subset \bbP^N.
\]
of $X \subset \bbP^N$ is the Zariski-closure of the union of all linear spaces spanned by $m$ points of $X$.

The notion of \emph{expected dimension} of $\sigma_m(X) \subset \bbP^N$ follows from a straightforward parameter count:
\[
    \exp.\dim\sigma_m(X) = \min\{N, m\dim(X)+m-1\}.
\]
It is immediate to see that this is always an upper bound for the actual dimension: we say that $X$ is \emph{$m$-defective} if $\dim\sigma_m(X) < \exp.\dim\sigma_m(X)$.

Let us fix some notation we will use throughout the paper.
\begin{notation}\label{definition: rup and rdown} 
    Let $\bfa = (a_1,\ldots,a_k), \bfb = (b_1,\ldots,b_k)\in \N^k$:
    \begin{itemize}
        \item For any $j \in \N$, we write $
            \bfa(j) = (a_1-j,a_2,\ldots,a_k)$.
        \item We write $\bfa \succeq \bfb$ if $a_i \geq b_i$ $\forall i \in \{1,\ldots,k\}$.
\item We write $|\bfa| = a_1+\dots+a_k$% \sum_{i=1}^k a_i$
.
    \end{itemize}
If $\bfn = (n_1,\dots,n_k)$ and $\bfd = (d_1,\dots,d_k)$ are $k$-tuples of positive integers, then we set
    \[
        N_{\bfn,\bfd} = \prod_{i=1}^k\binom{n_i+d_i}{d_i}
    \]
and we define
\begin{equation}\label{eq:critical_values}
        r^*(\bfn,\bfd) = \ru{\frac{N_{\bfn,\bfd}}{|\bfn|+1}} \quad \text{ and } \quad r_*(\bfn,\bfd) = \rd{\frac{N_{\bfn,\bfd}}{|\bfn|+1}}.
    \end{equation}
\end{notation}
\begin{remark}\label{rmk:critical_values}
The two values defined in \eqref{eq:critical_values} are \textit{critical} in the following sense:
\begin{itemize}
\item $r^*(\bfn,\bfd)$ is the smallest integer $m$ such that the $m$-th secant varieties is expected to fill the ambient space, namely it is expected to have dimension $N_{\bfn,\bfd}-1$. Since $\dim\sigma_m(X)$ is increasing with respect to $m$, if $\SV_\bfn^\bfd$ is not $r^*(\bfn,\bfd)$-defective then it is not $m$-defective for any $m \geq r^*(\bfn,\bfd)$.  For these values of $m$ we say that $\sigma_m(\SV_\bfn^\bfd)$ is \textit{superabundant}.
\item $r_*(\bfn,\bfd)$ is the largest integer $m$ such that the $m$-th secant varieties is expected to have dimension equal to the parameter count $m(|\bfn|+1)-1$. Since the difference of the dimensions of two consecutive secant varieties of $\SV_\bfn^\bfd$ is at most $|\bfn|+1$, if $\SV_\bfn^\bfd$ is not $r_*(\bfn,\bfd)$-defective then it is not $m$-defective for any $m \leq r_*(\bfn,\bfd)$. For these values of $m$ we say that $\sigma_m(\SV_\bfn^\bfd)$ is \textit{subabundant}.
%\item The $r_*(\bfn,\bfd)$-th secant variety corresponds to the \textit{last} one that is expected to have dimension equal to the parameter count $r_*(\bfn,\bfd)(|\bfn|+1)-1$. Since the difference of the dimensions of two consecutive secant varieties of $\SV_\bfn^\bfd$ is at most $|\bfn|+1$, if $\SV_\bfn^\bfd$ is not $r_*(\bfn,\bfd)$-defective, then it is not defective for any $r \leq r_*(\bfn,\bfd)$. These are called \textit{subabundant cases}.
 %   \item The $r^*(\bfn,\bfd)$-th secant variety corresponds to the \textit{first} one that is expected to fill the ambient space, i.e., to have dimension equal to $N_{\bfn,\bfd}$. Since a secant variety is contained in the following one, if $\SV_\bfn^\bfd$ is not $r^*(\bfn,\bfd)$-defective, then it is not defective for any $r \geq r_*(\bfn,\bfd)$. These are called \textit{superabundant cases}.
\end{itemize}
Therefore, in order to prove that a Segre-Veronese variety $\SV_\bfn^\bfd$ is never defective, it is enough to prove non-defectiveness at the critical values.
\end{remark}

% We follow the usual strategy of translating the problem to an {\it interpolation problem}. 

% Fixed $\bfn, \bfd \in \N^k$, let $\calL_{\bfn,\bfd}(Z)$ be the linear system of divisors in $\bbP^{\bfn}$ of multi-degree $\bfd$ containing the zero-dimensional scheme $Z$. If $Z = m_1P_1 + \ldots + m_sP_s$ is a union of fat points with general support, we write $\calL_{\bfn,\bfd}(m_1,\ldots,m_s) = \calL_{\bfn,\bfd}(Z)$. We write $m^s = (m,\ldots,m)$.

% It is well known that 
% \[
%     \codim \sigma_s(\SV^\bfd_\bfn) = \dim \calL_{\bfn,\bfd}(2^s).
% \]

% We say that $Z \subset \bbP^n$ is \emph{$\bfd$-regular}, or equivalently $\calL_{\bfn,\bfd}(Z)$ is \emph{regular}, if and only if $\dim\calL_{\bfn,\bfd} = \max\{0,N_{\bfn,\bfd}-\deg(Z)\}$. Given $Z' \subset Z \subset \bbP^N$ two zero-dimensional schemes, then:
% \begin{itemize}
%     \item if $Z$ is $\bfd$-regular then $Z'$ is also $\bfd$-regular;
%     \item if $\calL_{\bfn,\bfd}(Z')$ is empty, then $\calL_{\bfn,\bfd}(Z)$ is empty. 
% \end{itemize}
% These observations has the immediate consequence that, in order to prove \Cref{thm: ultimate goal} it is enough to consider the critical values given by \eqref{eq:critical_values}. In other words, it is enough to prove that 
% \begin{enumerate}
%     \item $\calL_{\bfn,\bfd}(2^{r_*(\bfn,\bfd)})$ is regular;
%     \item $\calL_{\bfn,\bfd}(2^{r^*(\bfn,\bfd)})$ is empty.
% \end{enumerate}

%\subsection*{Horace differential method}
The \emph{Horace method} is an inductive approach that goes back to G. Castelnuovo and was improved by J. Alexander and A. Hirschowitz, leading to the classification of defective Veronese varieties. This is a degeneration technique to study the dimension of complete linear systems of divisors with base points in general position with some multiplicities. We refer to \cite[section 2.2]{bernardi2018hitchhiker} for a detailed presentation of the method and its extensions. For the purpose of the present paper, we will employ the Horace method in the following formulation%recall the following precise statement of the method which highlight the inductive approach
.

% The general idea is as follows. Given a zero-dimensional scheme $Z$ and a divisor $H$, one considers the \emph{trace} ${\rm Tr}_H(Z)$ of $Z$ on $H$, given by the schematic intersection of $Z$ with $H$, and the \emph{residue} ${\rm Res}_H(Z)$ of $Z$ with respect to $H$, defined by $\mathcal{I}_Z : \mathcal{I}_H$, where $\mathcal{I}_Z, \mathcal{I}_H$ denote the defining ideal sheaves. The goal is to prove the regularity of $Z$ from the regularity of ${\rm Tr}_H(Z)$ and ${\rm Res}_H(Z)$. Due to arithmetic constrains this approach cannot always work. For this reason, Alexander and Hirschowitz optimized it, by refining the definition of residue and trace, giving rise to the so-called \emph{differential Horace method}. 

    %, which is formulated with the hypothesis $d_1\ge3$, but is true also when $d_1=2$.}
    %More precisely: the proof of \cite[Theorem 1.1]{AboBra13} is given in \cite[Theorem 2.9]{AboBra13}. The only time they require $d_1\ge 3$ is when they apply \cite[Lemma 2.5]{AboBra13}. That lemma is a rephrasing of \cite[Lemma 3]{chandler}. That assumption is not required. We only need $d_1\ge 2$, otherwise it makes no sense to take a divisor in the second step.}

\begin{theorem}[{\cite[Theorem~1.1]{AboBra13}}]\label{thm: differential horace}
Let $\bfn,\bfd \in \bbN^k$ be such that $d_1 \ge 3$ and let $r\in\N$. Let
\begin{align*}
    & s_r = s_r(\bfn,\bfd) = \rd{\frac{(|\bfn|+1)r-N_{\bfn,\bfd(1)}}{|\bfn|}} \quad \text{ and } \\
    & \epsilon_r = \epsilon_r(\bfn,\bfd) = (|\bfn|+1)r-N_{\bfn,\bfd(1)}-|\bfn|{s_r(\bfn,\bfd)}.
\end{align*} 
If all the following conditions are satisfied:
\begin{enumerate}
\item\label{item: n_1-1} $\SV_{\bfn(1)}^\bfd$ is not $s_r$-defective;
\item\label{item: d_1-1} $\SV_{\bfn}^{\bfd(1)}$ is not $(r-s_r)$-defective and $s_r \ge \epsilon_r$;
\item\label{item: d_1-2} $\SV_{\bfn}^{\bfd(2)}$ is not $(r-s_r-\epsilon_r)$-defective and $(r-s_r-\epsilon_r)(|\bfn|+1)\ge N_{\bfn,\bfd(2)}$;
\end{enumerate}
then $\SV_{\bfn}^{\bfd}$ is not $r$-defective.
\end{theorem}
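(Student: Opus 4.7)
The plan is to apply Terracini's lemma to translate the statement into a postulation question for double fat points, and then to use the Alexander--Hirschowitz differential Horace method with a carefully chosen degeneration onto a hyperplane. By Terracini's lemma, $\SV_\bfn^\bfd$ is not $r$-defective if and only if the scheme $Z = 2P_1+\cdots+2P_r$ of $r$ general double points in $\bbP^\bfn$ imposes $\min\{r(|\bfn|+1),N_{\bfn,\bfd}\}$ independent conditions on $H^0(\bbP^\bfn,\oo(\bfd))$. By semicontinuity, it suffices to exhibit a single specialization $Z_0$ of the supports for which this holds.

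I would fix a divisor $H\subset\bbP^\bfn$ of multi-degree $(1,0,\ldots,0)$, so that $H\cong\bbP^{\bfn(1)}$ and $\oo_{\bbP^\bfn}(\bfd)|_H\cong\oo_H(\bfd)$. The Castelnuovo exact sequence gives
\[
  h^0(\bbP^\bfn,\I_Z(\bfd))\;\leq\;h^0(H,\I_{\Tr_H Z,\,H}(\bfd))+h^0(\bbP^\bfn,\I_{\Res_H Z}(\bfd(1))),
\]
and the goal is to force equality via a suitable choice of $Z_0$. I would organize $Z_0$ into three groups: $s_r$ full double points placed on $H$ (each contributing a double point to $\Tr_H$ and a simple point to $\Res_H$); $\epsilon_r$ double points degenerated via the Alexander--Hirschowitz differential trick so that in the flat limit each contributes only a simple point to $\Tr_H$ but a length-$|\bfn|$ scheme supported inside $H$ to $\Res_H$; and $r-s_r-\epsilon_r$ generic double points off $H$. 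The identity
\[
  s_r|\bfn|+\epsilon_r+N_{\bfn,\bfd(1)}=r(|\bfn|+1),
\]
equivalent to the defining formula of $\epsilon_r$, records exactly the balance required for the iterated Castelnuovo estimates below to be sharp.

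After specialization, the trace on $H$ consists of $s_r$ double points and $\epsilon_r$ simple points on $\bbP^{\bfn(1)}$; hypothesis~(1), combined with the inequality $s_r\geq\epsilon_r$ from~(2) (which is precisely what allows the differential specialization to be realised as a flat limit of $\epsilon_r$ among the original $s_r$ double points), ensures that the trace attains the expected $h^0$. The residue --- a union of $s_r$ simple points on $H$, $\epsilon_r$ length-$|\bfn|$ schemes inside $H$, and $r-s_r-\epsilon_r$ generic double points off $H$, tested against $\oo_{\bbP^\bfn}(\bfd(1))$ --- is then handled by a second Castelnuovo sequence with respect to $H$: its new trace collects the contributions of the schemes supported inside $H$ and is controlled by hypothesis~(2) on non-defectivity of $\SV_\bfn^{\bfd(1)}$ with $r-s_r$ effective double points, while its new residue is exactly $r-s_r-\epsilon_r$ generic double points on $\bbP^\bfn$ tested against $\oo_{\bbP^\bfn}(\bfd(2))$, which hypothesis~(3) forces to have vanishing $h^0$ via the superabundance inequality $(r-s_r-\epsilon_r)(|\bfn|+1)\geq N_{\bfn,\bfd(2)}$. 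Combining the two Castelnuovo estimates with the Hilbert-function additivity $N_{\bfn,\bfd}=N_{\bfn,\bfd(1)}+N_{\bfn(1),\bfd}$ shows that both bounds are saturated, giving non-defectivity.

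The main technical obstacle is the bookkeeping of the differential specialization: one has to verify that the $(2,H)$-differential schemes really produce the stated traces and residues in the flat limit, and that the resulting configurations of simple points, double points, and length-$|\bfn|$ schemes on $H$ lie in sufficiently general position for the independence of conditions extracted from hypotheses~(1), (2) and~(3) to survive the limit. The assumption $d_1\geq 3$ enters precisely to guarantee that the twice-decremented multi-degree $\bfd(2)$ still corresponds to a globally generated line bundle, so that the second Castelnuovo iteration and the superabundance condition in~(3) are meaningful.
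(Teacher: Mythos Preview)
The paper does not give a proof of this theorem: it is quoted as Theorem~1.1 of Abo--Brambilla (2013) and used as a black box throughout Section~3, so there is no in-paper argument to compare your sketch against.

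Your outline does track the differential Horace strategy of the cited source: Terracini reduces non-defectivity to postulation of $r$ double points, one specialises onto a divisor $H$ of class $(1,0,\ldots,0)$, $s_r$ points go fully onto $H$ while $\epsilon_r$ points are handled via the Alexander--Hirschowitz differential swap, and a second Castelnuovo pass reaches degree $\bfd(2)$. One inaccuracy worth flagging: your explanation of the condition $s_r\geq\epsilon_r$ as a flat-limit constraint on ``$\epsilon_r$ among the original $s_r$ double points'' is not right (in your own count the three groups are disjoint and already sum to $r$). Unwinding the definitions, $s_r\geq\epsilon_r$ is equivalent to $(|\bfn|+1)(r-s_r)\leq N_{\bfn,\bfd(1)}$, i.e.\ it is exactly the subabundance of the residual $(r-s_r)$-point problem in degree $\bfd(1)$; this is what allows hypothesis~(2) to be read as an \emph{independence} statement rather than a spanning one. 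The paper itself makes precisely this observation in the remark immediately following the theorem.
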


\begin{remark}
    The numerical assumption in condition \eqref{item: d_1-1} guarantees that $\sigma_{r-s_r}(\SV_{\bfn}^{\bfd(1)})$ is a subabundant case.
    % , i.e., $(r-s_r)(|\bfn|+1) \leq N_{\bfn,\bfd(1)}$
     On the contrary, the numerical condition \eqref{item: d_1-2} affirms that $\sigma_{r-s_r-\epsilon_r}(\SV_{\bfn}^{\bfd(2)})$ is a superabundant case.
\end{remark}

%\subsection*{Taveira Blomenhofer-Casarotti general result}
While it may be difficult to prove that a variety is not defective, %Most of the known defective algebraic varieties are $m$-defective for values of $m$ that are close to the critical ones. For this reason, 
in the literature several varieties have been proven to be not $m$-defective when $m$ is sufficiently far from the critical ones. One example is \cite{blomenhofer2023nondefectivity}, where A.~Blumenhofer and A.~Casarotti generalize a result by B. Ådlandsvik \cite{adlandsvik1988varieties} and prove non-defectivity for varieties that are invariant under the action of a group $G$ and contained in irreducible $G$-module.
%
%\ale{For instance A. Taveira Blomenhofer and A. Casarotti recently announced a beautiful result which provides a uniform interval around the critical values in which all defective cases are collected whenever we consider subvarieties invariant with respect to the action of a group $G$ and contained in irreducible $G$-module, like the Segre-Veronese variety $\SV_\bfn^\bfd$ with respect to $G = {\rm GL}_{n_1}\times\cdots\times{\rm GL}_{n_k}$, see \cite{blomenhofer2023nondefectivity}. This result generalizes a previous analogous result by B. Ådlandsvik for projective varieties having most secant that are not projective cones, see~\cite{adlandsvik1988varieties}.}
%
% In order to check condition \eqref{item: d_1-2} we will use the bound recently obtained
% by Blomenhofer-Casarotti.  
%
The precise statement that we need in the case of Segre-Veronese varieties is the following.

\begin{theorem}[{\cite[Theorem 4.8]{blomenhofer2023nondefectivity}}] \label{BC} Let $\bfn,\bfd\in\N^k$% be $k$-tuples of positive integers
. If $    m \le r_*(\bfn,\bfd) - |\bfn| - 1$ or $m \ge r^*(\bfn,\bfd)+|\bfn|+1$,
then $\SV_{\bfn}^\bfd$ is not $m$-defective.
\end{theorem}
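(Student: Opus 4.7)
The plan is to place \Cref{BC} in the framework of Ådlandsvik-type non-defectivity results for group-invariant subvarieties of irreducible modules, as refined in \cite{blomenhofer2023nondefectivity}. The whole point is that the Segre-Veronese variety sits very symmetrically inside its ambient space, and this symmetry severely restricts where secant defectivity may occur.

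First, I would identify the relevant group action. The ambient projective space $\bbP^{N_{\bfn,\bfd}-1}$ is the projectivization of the vector space of partially symmetric tensors
\[
W \;=\; \Sym^{d_1}\Bbbk^{n_1+1} \otimes \cdots \otimes \Sym^{d_k}\Bbbk^{n_k+1},
\]
on which the reductive group $G = \prod_{i=1}^k GL_{n_i+1}(\Bbbk)$ acts linearly. Each factor $\Sym^{d_i}\Bbbk^{n_i+1}$ is irreducible as a $GL_{n_i+1}$-module, and an external tensor product of irreducibles over a product group remains irreducible, so $W$ is an irreducible $G$-module. By construction, $\SV_\bfn^\bfd$ is the $G$-orbit closure of the class of a pure tensor $v_1^{d_1}\otimes\cdots\otimes v_k^{d_k}$; in particular it is $G$-invariant, and therefore so is every secant variety $\sigma_m(\SV_\bfn^\bfd)$.

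Second, I would invoke the abstract theorem of Blomenhofer-Casarotti: for a non-degenerate irreducible projective variety $X \subset \bbP V$ that is invariant under the action of a connected algebraic group $G$ acting irreducibly on $V$, defectivity of $\sigma_m(X)$ can only occur in a window of half-width $\dim X + 1$ around the critical values $r^*(X)$ and $r_*(X)$. Specializing this to $X = \SV_\bfn^\bfd$, which has dimension $|\bfn|$, produces precisely the two numerical conditions $m \leq r_*(\bfn,\bfd) - |\bfn| - 1$ and $m \geq r^*(\bfn,\bfd) + |\bfn| + 1$ appearing in the statement. The core ingredients behind the abstract theorem are Terracini's lemma, which identifies $T_p \sigma_m(X)$ with the span of $m$ general tangent spaces to $X$, together with the fact that, since $V$ is an irreducible $G$-module, the $G$-translates of any nonzero subspace span all of $V$; combining these one obtains a quantitative strict monotonicity for the secant defect, forcing it to decrease by at least one at each step outside the critical window.

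The main obstacle is precisely extracting the sharp constant $|\bfn|+1 = \dim X + 1$. A softer version of the argument (essentially Ådlandsvik's original result in \cite{adlandsvik1988varieties}) only yields non-defectivity for $m$ asymptotically far from $r_*, r^*$. Nailing down the linear bound in $\dim X$ requires a delicate analysis of how, under the $G$-action, a hypothetical defect at some $m_0$ would have to persist along a long chain of consecutive secants, eventually contradicting either the trivial upper bound $\dim \sigma_m(X) \leq N_{\bfn,\bfd}-1$ (in the superabundant case) or $\dim \sigma_m(X) \leq m(|\bfn|+1) - 1$ (in the subabundant case). This quantitative strengthening is exactly the contribution of \cite[Theorem~4.8]{blomenhofer2023nondefectivity} over the classical result.
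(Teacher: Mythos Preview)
The paper does not give its own proof of \Cref{BC}; it simply quotes the result from \cite[Theorem~4.8]{blomenhofer2023nondefectivity}, preceded by the remark that Blomenhofer and Casarotti generalize \r{A}dlandsvik's theorem to varieties invariant under a group $G$ acting irreducibly on the ambient module. Your sketch is exactly this framework applied to $\SV_\bfn^\bfd$ inside the irreducible $\prod_i GL_{n_i+1}$-module $\bigotimes_i \Sym^{d_i}\Bbbk^{n_i+1}$, so it is fully consistent with the paper's treatment and with the content of the cited reference; there is nothing to compare against beyond that.
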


\section{Proof of \Cref{thm: ultimate goal}}\label{sec:base}

We prove \Cref{thm: ultimate goal} by induction on the number $k$ of factors, on the dimension $n_1$ and on the degree $d_1$% of the factor with smallest dimension which, up to reordering the factors, we may assume to be the first one
. First we give the necessary results to deal with the base case for $n_1 = 1$ and two base cases for $d_1 \in \{3,4\}$.
% We prove \Cref{thm: ultimate goal} by induction on the number of factors $k$. The base case $k = 2$ has been solved in \cite[Theorem 1.2]{GO}. %by the last two authors.\begin{theorem}[{\cite[Theorem 1.2]{GO}}]\label{thm:GO}    Let $\bfn$ and $\bfd \succeq (3,3)$ be pairs of positive integers. Then $\SV_\bfn^\bfd$ is not defective.\end{theorem}
% %
% Fixed $k \geq 3$, we apply \Cref{thm: differential horace} and we proceed by double induction on the dimension and the degree. For that, we need one base case for $n_1 = 1$ and two base cases for $d_1 \in \{3,4\}$. 
%
%\subsection*{Initial case with respect to the number of factors} The case $k = 2$ of \Cref{thm: ultimate goal} was proven by the last two authors.
%
%
%\subsection{Initial case with respect to dimension} 

We recall the following result by E. Ballico.

\begin{theorem}[{\cite[Theorem~2]{ballico}}]
    \label{pro: ballico adds a factor} 
    Let $X\subset \p^N$ be an irreducible non-degenerate variety with $\dim(X)\ge 3$.  Let 
    \[
        r=\rd{\frac{N+1}{\dim(X)}}
    \]
    and assume that $X$ is not $r$-defective. Let $d\ge 2$ and consider $Y=\p^1\times X$ embedded in $\p^{(d+1)(N+1)-1}$ by the line bundle $\oo_{\p^1}(d)\otimes\oo_X(1)$. If $N+1> \dim(X)^2$, then $Y$ is not defective.
\end{theorem}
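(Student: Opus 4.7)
My plan is to apply Terracini's lemma to reduce the non-defectivity of $\sigma_m(Y)$ to the statement that a generic union $Z$ of $m$ 2-fat points on $Y$ imposes independent conditions on the linear system $|L|$, where $L = \oo_{\p^1}(d) \boxtimes \oo_X(1)$. Since the difference between the dimensions of two consecutive secant varieties of $Y$ is at most $\dim(Y)+1 = \dim(X)+2$, a monotonicity argument as in \Cref{rmk:critical_values} shows that it suffices to verify non-defectivity at the two critical values $m_* = \lfloor (d+1)(N+1)/(\dim(X)+2) \rfloor$ and $m^* = \lceil (d+1)(N+1)/(\dim(X)+2) \rceil$.

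I would then run an induction on $d$, with the base case $d = 2$ treated separately by a direct Horace-type specialization to the three fibers of $\p^1$. For the inductive step, specialize some of the 2-fat points of $Z$ to lie on a single fiber $F = \{p\} \times X \cong X$ and use the Castelnuovo restriction sequence
\begin{equation*}
    0 \to \I_{\mathrm{Res}_F(Z),Y}(L-F) \to \I_{Z,Y}(L) \to \I_{Z \cap F,F}(L|_F) \to 0.
\end{equation*}
The trace lives on $F \cong X$ with line bundle $\oo_X(1)$, so the hypothesis that $X$ is not $r$-defective governs how many fat points can be placed on $F$. The residual lives on $Y$ with line bundle $L - F \cong \oo_{\p^1}(d-1) \boxtimes \oo_X(1)$, which is of the same shape with $d$ decreased by one, so the induction hypothesis closes this side.

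The main obstacle lies in the numerics. When a 2-fat point supported on $F$ is passed through the sequence, it contributes a full 2-fat point of $X$ to the trace and a reduced point to the residual, so placing exactly $r$ 2-fat points on $F$ both kills the trace (by the non-defectivity of $X$ at level $r$) and leaves behind $r$ reduced points in the residual. Those $r$ reduced points, together with the 2-fat points left off $F$, must then land at the critical value for the smaller $Y$, and this will rarely match on the nose. To make the arithmetic close I expect to need the differential Horace method of \Cref{thm: differential horace}, splitting a single 2-fat point into partial trace and partial residual contributions. The assumption $N+1 > \dim(X)^2$ is precisely what makes this balancing feasible: it forces $r \geq \dim(X)+1$, providing enough room inside each fiber to redistribute tangent directions so that the trace and the residual simultaneously arrive at their respective critical thresholds, allowing the induction to carry through.
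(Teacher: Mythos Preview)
The paper does not prove this statement: \Cref{pro: ballico adds a factor} is quoted verbatim from \cite[Theorem~2]{ballico} and used as a black box in the proof of \Cref{lemma: n1 = 1}. There is therefore no proof in the paper to compare your proposal against.

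As for the proposal itself, the broad architecture---Terracini plus induction on $d$ via the Castelnuovo exact sequence with respect to a fiber $F\cong X$, using the non-defectivity of $X$ on the trace and the inductive hypothesis on the residual---is indeed the shape of Ballico's argument. Two points of caution, though. First, the differential Horace statement you invoke (\Cref{thm: differential horace}) is formulated in the paper only for Segre--Veronese varieties, with explicit numerical formulas for $s_r$ and $\epsilon_r$ tailored to that setting; to run your argument for an arbitrary non-degenerate $X\subset\p^N$ you would need the general differential Horace lemma of Alexander--Hirschowitz, not this specialized version. Second, your numerical claim that $N+1>\dim(X)^2$ forces $r\ge\dim(X)+1$ is off by one: from $N+1\ge\dim(X)^2+1$ you only get $r=\lfloor(N+1)/\dim(X)\rfloor\ge\dim(X)$, not $\dim(X)+1$. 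The correct role of the hypothesis $N+1>\dim(X)^2$ in Ballico's proof is more delicate than just providing slack for redistributing tangent directions, and the base case $d=2$ (which you defer) is where most of the actual work lies.
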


By applying this theorem, we prove a technical result that will be useful in the proof of \Cref{thm: ultimate goal}.% holds for $n_1 = 1$. %of  is an corollary of the latter proposition by induction on the number of factors $k$, where the initial case $k = 2$ was proven in \cite[Theorem 1.2]{GO}.

% \begin{equation}\label{eq: induction hypothesis on k}
%     \mbox{if }d_i\ge 3\mbox{ for every $2\le i\le k$, then }\SV_{n_2\times\dots\times n_k}^{d_2,\dots,d_k}\mbox{ is not defective}.
% \end{equation}

% Let us start by dealing with the case $n_1=1$.

\begin{corollary}\label{lemma: n1 = 1}
    Let $k\ge 2$. Let $\bfn' = (n_2,\ldots,n_k)$ and $\bfd' = (d_2,\ldots,d_k) \succeq 3^{k-1}$ be $(k-1)$-tuples of positive integers. If $\SV_{\bfn'}^{\bfd'}$ is not defective and $d_1\ge 2$, then $\SV_{(1,\bfn')}^{(d_1,\bfd')}$ is never defective.
%  \begin{proof}
%  This follows by \Cref{pro: ballico adds a factor} and assumption \eqref{eq: induction hypothesis on k}.
%  \end{proof}
\end{corollary}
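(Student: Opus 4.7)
The plan is to apply \Cref{pro: ballico adds a factor} with $X = \SV_{\bfn'}^{\bfd'} \subset \bbP^{N_{\bfn',\bfd'}-1}$, and with the new $\bbP^1$-factor equipped with $\mathcal{O}_{\bbP^1}(d_1)$. This identifies $Y = \bbP^1 \times X$ embedded by $\mathcal{O}_{\bbP^1}(d_1) \otimes \mathcal{O}_X(1)$ precisely with $\SV_{(1,\bfn')}^{(d_1,\bfd')}$, so verifying Ballico's hypotheses gives the desired conclusion directly. Specifically, one needs $X$ irreducible and non-degenerate (clear), the dimensional bound $\dim X = |\bfn'| \geq 3$, the numerical inequality $N_{\bfn',\bfd'} > |\bfn'|^2$, the degree condition $d_1 \geq 2$ (given), and non-$r$-defectivity of $X$ at the critical value $r = \lfloor N_{\bfn',\bfd'}/|\bfn'|\rfloor$, which is granted by the assumption that $\SV_{\bfn'}^{\bfd'}$ is non-defective.

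The numerical inequality is automatic from $\bfd' \succeq 3^{k-1}$. I would first establish the single-variable bound $\binom{n+d}{d} \geq (n+1)^2$ for all integers $n \geq 1$ and $d \geq 3$: since $\binom{n+d}{d}$ is non-decreasing in $d$ for $n \geq 1$, it reduces to the case $d=3$, where $(n+1)(n+2)(n+3)/6 \geq (n+1)^2$ simplifies to $n^2 \geq n$. Multiplying over the $k-1$ factors and using the elementary inequality $\prod_i (n_i+1) \geq |\bfn'|+1$ (by induction on $k$) gives $N_{\bfn',\bfd'} \geq (|\bfn'|+1)^2 > |\bfn'|^2$.

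The main obstacle is the assumption $\dim X \geq 3$ in \Cref{pro: ballico adds a factor}, which forces a separate treatment of the low-dimensional configurations $|\bfn'| \in \{1,2\}$. I would dispose of these exceptional cases by invoking earlier results. For $|\bfn'|=1$ necessarily $k=2$ and $\bfn'=(1)$, and $\SV_{(1,1)}^{(d_1,d_2)}$ with $d_1 \geq 2$, $d_2 \geq 3$ is covered by the two-factor analyses of \cite{CatGerGim05, AboBra13}; the case $|\bfn'|=2$ with $k=2$ and $\bfn'=(2)$ gives $\SV_{(1,2)}^{(d_1,d_2)}$, which is classical and again covered by those references; the remaining case $|\bfn'|=2$ with $k=3$ and $\bfn'=(1,1)$ concerns three copies of $\bbP^1$, whose non-defectivity in every multi-degree is established by Laface-Postinghel \cite{P1P1P1}. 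Once these finite families are settled, Ballico's theorem completes the proof in the generic range $|\bfn'| \geq 3$.
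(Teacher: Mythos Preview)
Your proposal is correct and follows essentially the same route as the paper: verify the inequality $N_{\bfn',\bfd'} > |\bfn'|^2$, handle the small cases $|\bfn'|\le 2$ by direct citation, and then apply \Cref{pro: ballico adds a factor} for $|\bfn'|\ge 3$. The differences are cosmetic: your factor-wise bound $\binom{n+3}{3}\ge(n+1)^2$ combined with $\prod_i(n_i+1)\ge|\bfn'|+1$ is a tidy alternative to the paper's coefficient comparison, and you also cover the two $k=2$ exceptional configurations, whereas the paper's printed proof only treats $(\bbP^1)^3$ via \cite{P1P1P1} (its ``$k=2$'' there is evidently a typo for $k=3$).
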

\begin{proof}
We start by proving that 
\begin{align}
        \prod_{i=2}^k\binom{n_i+d_i}{n_i}>(n_2+\dots+n_k)^2.\label{ballico numbers}
    \end{align}
The left-hand-side is increasing with respect to $d_2,\dots ,d_k$, so it is enough to prove \eqref{ballico numbers} for {$\bfd=3^{k-1}$.}
%$d_1 = \ldots = d_k = 3$. 
%Now, in the expansion of $\prod_{i=2}^k\binom{n_i+3}{n_i} - (n_2+\ldots+n_k)^2$ as a polynomial in $\bbQ[n_1,\ldots,n_k]$ all the coefficients are positive. Indeed, i
On the left-hand-side %n $\prod_{i=2}^k\binom{n_i+3}{n_i}$ we have that 
$n_i^2$ appears with coefficient $1$, while $n_in_j$ ($i \neq j$) appears with coefficient $11^2/6^2 > 2$.

%We prove our statement by induction on $k$. Suppose that $k = 2$. 
If $k=2$ and $n_2=n_3=1$, then $\SV_{(1,1,1)}^{(d_1,d_2,d_3)}$ is not defective by \cite[Theorem 3.1]{P1P1P1}. In any other case we have $n_2+\dots +n_k\ge 3$, so we can apply \Cref{pro: ballico adds a factor} to the variety $X=\SV_{\bfn'}^{\bfd'}$, which is not defective by hypothesis.%: indeed, the latter is not defective by \cite[Theorem 1.2]{GO} and the numerical condition holds by \eqref{ballico numbers}.
%We apply  to deduce that $\SV_{1,n_2,n_3}^{d_1,d_2,d_3}$ is not defective which is known to be non-defective by \cite[Theorem 1.2]{GO}. Since \Cref{pro: ballico adds a factor} applies to varieties at least three-dimensional, the case $n_2 = n_3 = 1$ should be done differently, but this case is know by 
%    %
%For $k \geq 4$, the variety $\SV_{\bfn'}^{\bfd'}$ has dimension at least $3$. Since, we assume that $\SV_{\bfn'}^{\bfd'}$ is not defective by assumption and we have already proved \eqref{ballico numbers}, then the statement follows \Cref{pro: ballico adds a factor}.
\end{proof}

%We continue with the proof of the general case of \Cref{thm: ultimate goal}. As already mentioned, our main strategy will be to use the Horace differential method. 

%%%%%%%%%%%%%%%%%%%%%%%%%%%%%%%%%%%%%%%%%%%%%%%%%%%%%%%%%%%%%%%%%%%%%%%%%%%%%%%%%%%%%%%%%%%%%%%%%%%%%%%%%%%%%%%%%%%%%%%%%%%%%%
%%%%%%%%%%%%%%%%%%%%%%%%%%%%%%%%%%%%%%%%%%%%%%%%%%%%%%%%%%%%%%%%%%%%%%%%%%%%%%%%%%%%%%%%%%%%%%%%%%%%%%%%%%%%%%%%%%%%%%%%%%%%%%
%%%%%%%%%%%%%%%%%%%%%%%%%%%%%%%%%%%%%%%%%%%%%%%%%%%%%%%%%%%%%%%%%%%%%%%%%%%%%%%%%%%%%%%%%%%%%%%%%%%%%%%%%%%%%%%%%%%%%%%%%%%%%%
%\subsection{Initial cases with respect to degree}\label{ssec:initial_cases_degree}
%\section{The base cases in small degrees}
In \Cref{thm: differential horace} there are no assumptions about the order of the $d_i$'s and the $n_i$'s. %Therefore, in the proof of \Cref{thm: ultimate goal}, we decide to apply the Horace method to the factor with smallest dimension $n_i$. For this reason, from now on we will assume that the factors are ordered in such a way that 
Up to permuting the  factors, it is not restrictive to suppose that $n_1 \leq n_2 \leq \cdots \leq n_k$. This is crucial for some of our numerical computations. 

Now we deal with the case $d_1=3$. In order to make our arguments easier to read, we postpone some of the arithmetic computations to \Cref{appendix: contacci}.

% We prove the base cases of the induction for small values of $d_1$. 
% By symmetry of the $d_i$'s, and without loss of generality, we will apply \Cref{thm: differential horace} under the assumption that $n_1 \leq n_2 \leq \cdots \leq n_k$. We start by proving the base cases on small degrees. 

% In the next proposition, we prove that \Cref{thm: ultimate goal} holds for $d_1=3$
% %\label{sec:d1=3}

\begin{proposition}\label{pro: initial case d1 = 3}
Let $k\ge 2$ and let $\bfn' = (n_2 \leq n_3 \le \cdots \leq n_k)$ and $\bfd' = (d_2,d_3,\ldots,d_k) \succeq 3^{k-1}$ be $(k-1)$-tuples of positive integers. If $\SV_{\bfn'}^{\bfd'}$ is not defective and  $n_1$ is a positive integer, then $\SV_{(n_1,\bfn')}^{(3,\bfd')}$ is not defective.
\end{proposition}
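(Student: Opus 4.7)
The plan is to proceed by induction on $n_1$. The base case $n_1 = 1$ follows directly from \Cref{lemma: n1 = 1}: since $d_1 = 3 \geq 2$ and $\SV_{\bfn'}^{\bfd'}$ is assumed non-defective, that corollary guarantees that $\SV_{(1,\bfn')}^{(3,\bfd')}$ is not defective.

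For the inductive step $n_1 \geq 2$, by \Cref{rmk:critical_values} it suffices to show that $\SV_{(n_1,\bfn')}^{(3,\bfd')}$ is not $r$-defective at the two critical values $r = r_*(\bfn,\bfd)$ and $r = r^*(\bfn,\bfd)$, where I set $\bfn = (n_1,\bfn')$ and $\bfd = (3,\bfd')$. For each such $r$, I would invoke the differential Horace method (\Cref{thm: differential horace}) on the first factor. Condition (1) of that theorem---non-defectivity of $\SV_{(n_1-1,\bfn')}^{(3,\bfd')}$ at level $s_r$---is then immediate from the inductive hypothesis applied to $n_1-1$.

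For conditions (2) and (3), which concern the auxiliary varieties $\SV_{\bfn}^{(2,\bfd')}$ at rank $r - s_r$ and $\SV_{\bfn}^{(1,\bfd')}$ at rank $r - s_r - \epsilon_r$, I would appeal to \Cref{BC} of Blomenhofer and Casarotti. Concretely, the aim is to establish the two inequalities
\[
    r - s_r \leq r_*(\bfn, (2,\bfd')) - |\bfn| - 1 \qquad \text{and} \qquad r - s_r - \epsilon_r \geq r^*(\bfn, (1,\bfd')) + |\bfn| + 1,
\]
for both $r \in \{r_*(\bfn,\bfd), r^*(\bfn,\bfd)\}$. These inequalities simultaneously push $r - s_r$ into the strict subabundant regime for $(2,\bfd')$ and $r - s_r - \epsilon_r$ into the strict superabundant regime for $(1,\bfd')$, so \Cref{BC} supplies the required non-defectivity. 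They also imply the numerical side-conditions $s_r \geq \epsilon_r$ and $(r - s_r - \epsilon_r)(|\bfn|+1) \geq N_{\bfn,(1,\bfd')}$ demanded by \Cref{thm: differential horace}, since these two conditions are exactly the (weaker) statements that $r - s_r$ is subabundant for $(2,\bfd')$ and $r - s_r - \epsilon_r$ is superabundant for $(1,\bfd')$.

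The main obstacle is precisely the verification of the two displayed inequalities. They reduce to concrete estimates comparing $N_{\bfn,(3,\bfd')}$, $N_{\bfn,(2,\bfd')}$, and $N_{\bfn,(1,\bfd')}$, and they use in an essential way the ordering assumption $n_1 \leq n_2 \leq \cdots \leq n_k$ together with $d_i \geq 3$ for $i \geq 2$. Following the authors' convention, I would delegate these arithmetic checks to \Cref{appendix: contacci}.
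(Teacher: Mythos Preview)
Your proposal is correct and matches the paper's proof essentially verbatim: induction on $n_1$ with base case \Cref{lemma: n1 = 1}, inductive step via \Cref{thm: differential horace} at the critical values, condition~(1) by the inductive hypothesis, and conditions~(2)--(3) via \Cref{BC} once the two displayed inequalities are established (these are exactly \Cref{lemma:d1=3_subabundant_BC} and \Cref{lemmaBC} in the appendix). Your observation that the side-conditions $s_r \geq \epsilon_r$ and $(r-s_r-\epsilon_r)(|\bfn|+1)\geq N_{\bfn,\bfd(2)}$ are the weaker sub-/superabundance statements implied by those same inequalities is also how the paper handles them (cf.~\Cref{lemma:s_vs_epsilon}).
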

\begin{proof}

We argue by induction on $n_1$. The initial case $n_1=1$ is \Cref{lemma: n1 = 1}. For $n_1 \geq 2$, we prove that $\SV_\bfn^\bfd$ is not $r$-defective for the critical values $r\in\{r_*(\bfn,\bfd),r^*(\bfn,\bfd)\}$ by applying \Cref{thm: differential horace}. We check all conditions: 
    \begin{enumerate}
\item $\SV_{\bfn(1)}^\bfd$ is not defective by our inductive hypothesis on $n_1$.
\item By \Cref{lemma:d1=3_subabundant_BC}, $r-s_r \leq r_*(\bfn,\bfd(1))-|\bfn|-1$, so $\SV_{\bfn}^{\bfd(1)}$ is not $(r-s_r)$-defective by \Cref{BC}. The numerical condition \Cref{thm: differential horace}(2) is \Cref{lemma:s_vs_epsilon}.
\item By \Cref{lemmaBC}, $r-s_r-\epsilon_r \geq r^*(\bfn,\bfd(2))+|\bfn|+1$, so $\SV_{\bfn}^{\bfd(2)}$ is not $(r-s_r-\epsilon_r)$-defective by \Cref{BC}. The numerical condition \Cref{thm: differential horace}(3) also follows from \Cref{lemmaBC}, because $r^*(\bfn,\bfd(2)) + |\bfn| + 1 \geq \frac{N_{\bfn,\bfd(2)}}{|\bfn|+1}$. \qedhere
    \end{enumerate} 
    % The second condition follows from , 
    % \chiara{is true by Lemma \ref{lemma:d1=3_subabundant_BC}}
    %  The third condition is true by \Cref{pro:segre induction} \ale{-- no need: it is BC} since $\bfd(2) = (1,d_2,\ldots,d_k)$, while the numerical condition follows from \Cref{lemmaBC} because $r^*(\bfn,\bfd(2)) + |\bfn| + 1 \geq \frac{N_{\bfn,\bfd(2)}}{|\bfn|+1}$.
\end{proof}

Next we consider  the case $d_1=4$. The proof is very similar to the previous one. The only difference is that we apply \Cref{pro: initial case d1 = 3} to check the second condition in the \Cref{thm: differential horace}.

\begin{proposition}\label{pro: initial case d1 = 4}
Let $k\ge 2$ and let $\bfn' = (n_2 \leq n_3 \le \cdots \leq n_k)$ and $\bfd' = (d_2,d_3,\ldots,d_k) \succeq 3^{k-1}$ be $(k-1)$-tuples of positive integers. If $\SV_{\bfn'}^{\bfd'}$ is not defective and  $n_1$ is a positive integer, then $\SV_{(n_1,\bfn')}^{(4,\bfd')}$ is not defective.
%    Let $k\ge 2$. Let $\bfn = (n_1 \leq n_2 \le\cdots \leq n_k)$ and $\bfd = (4,d_2,\ldots,d_k) \succeq (4,3^{k-1})$ be $k$-tuples of positive integers. Then $\SV_\bfn^\bfd$ is never defective.
\end{proposition}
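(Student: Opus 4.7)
The plan is to mirror the proof of \Cref{pro: initial case d1 = 3} verbatim, with the key substitution that wherever that proof needed \Cref{BC} to handle the $\bfd(1)=(2,\bfd')$ slice (which had $d_1-1=2$ and was thus outside the reach of any Horace-type induction on $d_1$), the current proof instead invokes \Cref{pro: initial case d1 = 3} itself, since after peeling off one degree from $\bfd=(4,\bfd')$ we land on $\bfd(1)=(3,\bfd')$, which is precisely the input that \Cref{pro: initial case d1 = 3} can digest. Induction on $n_1$ handles the dimension, and the two critical values of $r$ are the only ones that need checking thanks to \Cref{rmk:critical_values}.

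Concretely, I would set $\bfn=(n_1,\bfn')$ and $\bfd=(4,\bfd')$ and first dispose of the base case $n_1=1$ by \Cref{lemma: n1 = 1}, which applies because $d_1=4\ge 2$ and $\SV_{\bfn'}^{\bfd'}$ is not defective by assumption. For $n_1\ge 2$ and $r\in\{r_*(\bfn,\bfd),r^*(\bfn,\bfd)\}$, I would verify the three hypotheses of \Cref{thm: differential horace}:
\emph{(1)} $\SV_{\bfn(1)}^{\bfd}=\SV_{(n_1-1,\bfn')}^{(4,\bfd')}$ is not defective by the inductive hypothesis on $n_1$;
\emph{(2)} $\SV_{\bfn}^{\bfd(1)}=\SV_{(n_1,\bfn')}^{(3,\bfd')}$ is not defective by \Cref{pro: initial case d1 = 3}, so in particular not $(r-s_r)$-defective — this is the only structural difference from \Cref{pro: initial case d1 = 3};
\emph{(3)} $\SV_{\bfn}^{\bfd(2)}=\SV_{(n_1,\bfn')}^{(2,\bfd')}$ is not $(r-s_r-\epsilon_r)$-defective, which I would establish as in the previous proposition by showing the BC inequality $r-s_r-\epsilon_r\ge r^*(\bfn,\bfd(2))+|\bfn|+1$ and then applying \Cref{BC}. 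The same inequality also gives the numerical superabundance condition $(r-s_r-\epsilon_r)(|\bfn|+1)\ge N_{\bfn,\bfd(2)}$ for free.

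The main obstacle, as in \Cref{pro: initial case d1 = 3}, is the arithmetic: I would need analogues, tailored to $d_1=4$ (so with $N_{\bfn,\bfd(1)}$ replaced by the value coming from $\bfd=(4,\bfd')$, namely $\binom{n_1+3}{3}\prod_{i\ge 2}\binom{n_i+d_i}{n_i}$), of the three lemmas used in the $d_1=3$ proof, establishing $s_r\ge \epsilon_r$ from \Cref{thm: differential horace}\eqref{item: d_1-1} and $r-s_r-\epsilon_r\ge r^*(\bfn,\bfd(2))+|\bfn|+1$ at both critical values. These estimates hinge on the ordering assumption $n_1\le n_2\le\cdots\le n_k$ and on lower-bounding $N_{\bfn,\bfd(1)}$ (which grows like $\tfrac{n_1+4}{4}\,N_{\bfn,\bfd(2)}$, a larger factor than in the $d_1=3$ case); for condition \emph{(2)} the improvement is that, because \Cref{pro: initial case d1 = 3} now supplies non-defectivity of $\SV_{\bfn}^{\bfd(1)}$ \emph{outright}, I no longer need to separately force $r-s_r\le r_*(\bfn,\bfd(1))-|\bfn|-1$, which simplifies the numerics compared to \Cref{pro: initial case d1 = 3}. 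I expect these inequalities to follow from routine, if tedious, computations of exactly the flavour collected in \Cref{appendix: contacci}, and I would park them there to keep the body of the argument clean.
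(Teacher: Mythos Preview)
Your proposal is correct and matches the paper's proof essentially line for line: induction on $n_1$ with base case \Cref{lemma: n1 = 1}, then \Cref{thm: differential horace} at the two critical values, with condition~(2) handled by \Cref{pro: initial case d1 = 3} in place of \Cref{BC}. The only point worth flagging is that no new numerical lemmas ``tailored to $d_1=4$'' are needed: the estimates in \Cref{appendix: contacci} (specifically \Cref{lemma:s_vs_epsilon} for $s_r\ge\epsilon_r$ and \Cref{lemmaBC} for the superabundance bound) are stated and proved uniformly for all $\bfd\succeq 3^k$, so they apply here verbatim---the monotonicity in the $d_i$'s means the worst case is always $\bfd=3^k$, which was already treated.
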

\begin{proof}
 We argue by induction on $n_1$. The initial case $n_1=1$ is \Cref{lemma: n1 = 1}. For $n_1 \geq 2$, we prove that $\SV_\bfn^\bfd$ is not $r$-defective for $r\in\{r_*(\bfn,\bfd),r^*(\bfn,\bfd)\}$ by applying \Cref{thm: differential horace}. We check all conditions:
    \begin{enumerate}
 \item $\SV_{\bfn(1)}^\bfd$ is not defective by the inductive hypothesis.
\item $\SV_{\bfn}^{\bfd(1)}$ is not defective by \Cref{pro: initial case d1 = 3}. The numerical condition \Cref{thm: differential horace}(2) is \Cref{lemma:s_vs_epsilon}. 
\item By \Cref{lemmaBC}, we have $r-s_r-\epsilon_r \geq r^*(\bfn,\bfd(2))+|\bfn|+1$, so $\SV_{\bfn}^{\bfd(2)}$ is not $(r-s_r-\epsilon_r)$-defective by \Cref{BC}. The numerical condition \Cref{thm: differential horace}(3) also follows from \Cref{lemmaBC}, because $r^*(\bfn,\bfd(2)) + |\bfn| + 1 \geq \frac{N_{\bfn,\bfd(2)}}{|\bfn|+1}$. \qedhere
    \end{enumerate}
     % The second condition is , while the numerical requirement has been already proven in \Cref{lemma: main condition s epsilon}     \chiara{is true by Lemma \ref{lemma:d1=3_subabundant_BC}}. The third condition follows from \Cref{lemmaBC}: indeed, it allows us to deduce that $\SV_{\bfn}^{\bfd(2)}$ is not $(r-s_r-\epsilon_r)$-defective by \Cref{BC}, while it also guarantees the numerical requirement of the third condition of \Cref{thm: differential horace} because $r^*(\bfn,\bfd(2)) + |\bfn| + 1 \geq \frac{N_{\bfn,\bfd(2)}}{|\bfn|+1}$.
\end{proof}

%%%%%%%%%%%%%%%%%%%%%%%%%%%%%%%%%%%%%%%%%%%%%%%%%%%%%%%%%%%%%%%%%%%%%%%%%%%%%%%%%%%%%%%%%%%%%%%%%%%%%%%%%%%%%%%%%%%%%%%%%%%%%%
%%%%%%%%%%%%%%%%%%%%%%%%%%%%%%%%%%%%%%%%%%%%%%%%%%%%%%%%%%%%%%%%%%%%%%%%%%%%%%%%%%%%%%%%%%%%%%%%%%%%%%%%%%%%%%%%%%%%%%%%%%%%%%
%%%%%%%%%%%%%%%%%%%%%%%%%%%%%%%%%%%%%%%%%%%%%%%%%%%%%%%%%%%%%%%%%%%%%%%%%%%%%%%%%%%%%%%%%%%%%%%%%%%%%%%%%%%%%%%%%%%%%%%%%%%%%%

\subsection*{Proof of \Cref{thm: ultimate goal}}\label{sec:main} 
As we pointed out, without loss of generality we may assume that $n_1 \leq n_2 \leq \cdots \leq n_k$. We argue by induction on $k \geq 2$. The base case $k = 2$  is \cite[Theorem 1.2]{GO}. We assume that $k \geq 3$ and that $\SV_{n_2,\dots,n_k}^{d_2,\dots,d_k}$ is not defective, and we prove that $\SV_{\bfn}^{\bfd}$ is not defective. %We apply \Cref{thm: differential horace} on the factor of smallest dimension; in other words %, by symmetry of the $d_i$'s, we assume 
We proceed by one-step induction on $n_1$ and by two-step induction on $d_1$. The base case $n_1 = 1$ is a consequence of \Cref{lemma: n1 = 1}, while the base cases $d_1 \in \{3,4\}$ follow from \Cref{pro: initial case d1 = 3,pro: initial case d1 = 4}. Now we suppose that $n_1\ge 2$, $d_1\ge 5$ and we assume that $\SV_{\bfn(1)}^\bfd$, $\SV_{\bfn}^{\bfd(1)}$ and $\SV_{\bfn}^{\bfd(2)}$ are not defective. Thanks to \Cref{thm: differential horace}, in order to conclude, it is enough to show that the two numerical conditions hold. The numerical condition of \Cref{thm: differential horace}(2) is \Cref{lemma:s_vs_epsilon}.
The numerical condition \Cref{thm: differential horace}(3) follows from \Cref{lemmaBC}, because $r^*(\bfn,\bfd(2)) + |\bfn| + 1 \geq \frac{N_{\bfn,\bfd(2)}}{|\bfn|+1}$. \hfill \qedsymbol

%%%%%%%%%%%%%%%%%%%%%%%%%%%%%%%%%%%%%%%%%%%%%%%%%%%%%%%%%%%%%%%%%%%%%%%%%%%%%%%%%%%%%%%%%%%%%%%%%%%%%%%%%%%%%%%%%%%%%%%%%%%%%%
%%%%%%%%%%%%%%%%%%%%%%%%%%%%%%%%%%%%%%%%%%%%%%%%%%%%%%%%%%%%%%%%%%%%%%%%%%%%%%%%%%%%%%%%%%%%%%%%%%%%%%%%%%%%%%%%%%%%%%%%%%%%%%
%%%%%%%%%%%%%%%%%%%%%%%%%%%%%%%%%%%%%%%%%%%%%%%%%%%%%%%%%%%%%%%%%%%%%%%%%%%%%%%%%%%%%%%%%%%%%%%%%%%%%%%%%%%%%%%%%%%%%%%%%%%%%%
\section{The splitting lemma and proof of \Cref{thm:simultaneous_partialresult}}\label{sec:segre_induction}

Let $V$ and $W$ be $\Bbbk$-vector spaces with $\dim V = n_0 + 1$ and $\dim W = \alpha + 1$. Let $X \subseteq \mathbb{P}W$ be a non-degenerate algebraic variety and let $Y = \bbP V \times X \subset \bbP (V\otimes W)$ be the Segre product. In this section we describe an inductive method useful to derive non-defectivity of $Y$ from the non-defectivity of $X$. Using such a method we prove \Cref{thm:simultaneous_partialresult}.

% Let $V$ be an $(\ell+1)$-dimensional vector space over $\Bbbk$ and let $W$ be an $(\alpha+1)$-dimensional vector space over $\Bbbk$. We denote by $\mathbb{P} V$ the projective space of $V$ and by $\mathbb{P} W$ the projective space of $W$. Let $X \subseteq \mathbb{P}W$ be a non-degenerate projective variety with inclusion $\varphi: X \rightarrow \mathbb{P} W$ and let $\mathscr{O}_X(1) = \varphi^* \mathscr{O}_{\mathbb{P} W}(1)$. For each $i \in \{1,2\}$, we write $\pi_i$ for the projection of $\mathbb{P} V \times X$ to its $i$th factor, and let $Y$ be the image of $\mathbb{P}V \times X$ in $ \mathbb{P}(V \otimes W)$ under the embedding associated with the very ample line bundle $\mathscr{O}_{\mathbb{P}V}(1) \boxtimes \mathscr{O}_X(1) = \pi_1^* \mathscr{O}_{\mathbb{P}V}(1) \otimes \pi_2^* \mathscr{O}_X(1)$. The purpose of this subsection is to prove the following proposition: 
Let $\widehat{T}_pY$ denote the affine cone over the tangent space to $Y$ at $p$. It is immediate to observe that, if $p = [v \otimes w] \in Y$, then 
$
    \widehat{T}_pY = V \otimes w + v \otimes \widehat{T}_{[w]} X. 
$

\begin{proposition}[Segre induction]\label{pro:segre induction} 
    Let
    \[
        a_*=\rd{\frac{\alpha+1}{n_0+\dim X+1}}\mbox{ and } a^*=\ru{\frac{\alpha+1}{n_0+\dim X+1}}.
    \]
    Suppose that $X$ is neither $a_*$-defective nor $a^*$-defective. If $m \le (n_0+1)a_*$ or $m \ge (n_0+1)a^*$, then $Y$ is not $m$-defective.
\end{proposition}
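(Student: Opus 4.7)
The approach is to apply Terracini's Lemma and control the affine tangent sum at $m$ general points $p_i = [v_i\otimes w_i]\in Y$, using the formula
\[
\widehat{T}_{p_i}Y \;=\; V\otimes w_i + v_i\otimes \widehat{T}_{[w_i]}X
\]
given just above the statement. By the monotonicity principle of \Cref{rmk:critical_values} (which applies verbatim to $Y$), the two ranges reduce to verifying non-defectivity at the single endpoints $m=(n_0+1)a^*$ and $m=(n_0+1)a_*$: the sub/super-abundance of $\sigma_m(Y)$ at these values follows directly from the defining inequalities of $a_*,a^*$.

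For the super-abundant endpoint $m=(n_0+1)a^*$, the natural move is to specialize the $m$ points into $a^*$ "fibers" of $n_0+1$ points each, placed above $a^*$ general base points $[w_1],\ldots,[w_{a^*}]\in X$, with the $v$-components in each fiber chosen to form a basis of $V$. A short direct computation identifies the resulting tangent sum with $V\otimes\sum_i\widehat{T}_{[w_i]}X$, and the non-defectivity of $\sigma_{a^*}(X)$ combined with the numerical role of $a^*$ should force $\sum_i\widehat{T}_{[w_i]}X=W$, so the tangent sum equals the ambient $V\otimes W$. Semicontinuity then lifts the statement to general points and yields $\sigma_{(n_0+1)a^*}(Y)=\mathbb{P}(V\otimes W)$.

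For the sub-abundant endpoint $m=(n_0+1)a_*$, I would work with general points and write the tangent sum as $V\otimes W'+\sum_i v_i\otimes T_i$, with $W'=\langle w_1,\ldots,w_m\rangle$ and $T_i=\widehat{T}_{[w_i]}X$. Viewing this as the image of the obvious surjection from $V\otimes W'\oplus\bigoplus_iT_i$, the dimension is controlled by the kernel, which always contains the $m$-dimensional "diagonal" arising from $t_i\in\langle w_i\rangle$. The desired equality
\[
\dim\Bigl(V\otimes W'+\sum_i v_i\otimes T_i\Bigr)=(n_0+1)m+m\dim X=m(n_0+\dim X+1)
\]
holds precisely when this diagonal is all of the kernel, equivalently when the induced map
\[
\bigoplus_i T_i/(T_i\cap W')\;\longrightarrow\; V\otimes (W/W')
\]
is injective. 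This is the content of the splitting lemma, and the sub-abundance inequality $m(n_0+\dim X+1)\le(n_0+1)(\alpha+1)$ is exactly the numerical slack that allows injectivity to hold generically.

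The main obstacle is the splitting lemma: extracting from the mere non-defectivity of $\sigma_{a_*}(X)$ a generic-position statement strong enough to force $T_i\cap W'=\langle w_i\rangle$ and the injectivity above. A clean isolation of this statement, combined with the super-abundant endpoint computation and the monotonicity of \Cref{rmk:critical_values}, would then assemble into \Cref{pro:segre induction}.
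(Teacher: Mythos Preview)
Your super-abundant argument contains a genuine error. You claim that non-defectivity of $\sigma_{a^*}(X)$ forces $\sum_{j=1}^{a^*}\widehat T_{[w_j]}X=W$, but that would require $a^*(\dim X+1)\ge\alpha+1$, whereas the definition of $a^*$ only gives $a^*(n_0+\dim X+1)\ge\alpha+1$. For a concrete failure take $n_0=\dim X=1$ and $\alpha=5$: then $a^*=2$, two tangent lines of a non-degenerate curve in $\bbP^5$ span at most a $4$-dimensional subspace of $W$, and your specialized tangent sum $V\otimes\sum_j\widehat T_{[w_j]}X$ has dimension at most $8<12=\dim(V\otimes W)$. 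The fiber specialization is too degenerate to show that $\sigma_4(Y)$ fills: by placing $n_0+1$ points over the same $[w_j]$ you make all their $V\otimes w_j$ contributions coincide, losing $n_0$ dimensions per fiber that cannot be recovered this way.

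The paper's splitting lemma runs in the orthogonal direction. Instead of grouping points by their image in $X$, it decomposes $V=V_1\oplus V_2$ and specializes the $v$-components into the two summands, keeping all $[w_i]$ general and \emph{distinct} on $X$. The tangent sum then breaks as a direct sum of two smaller instances on $\bbP V_1\times X$ and $\bbP V_2\times X$, each carrying, as extra ``simple'' constraints, the spaces $V_\ell\otimes w_i$ contributed by the points lying in the other summand. Iterating the split down to $\dim V_\ell=1$ reduces both endpoints to the statement that $a_*$ (resp.\ $a^*$) general tangent spaces of $X$ together with $n_0a_*$ (resp.\ $n_0a^*$) general points span the expected subspace of $W$, which is precisely what the hypothesis on $\sigma_{a_*}(X)$ and $\sigma_{a^*}(X)$ provides. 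Your envisioned splitting lemma, controlling $T_i\cap W'$ and an injectivity map for $(n_0+1)a_*$ general $w$-points, is a different and harder statement, and the hypothesis---which concerns only $a_*$ tangent spaces on $X$---gives no direct leverage on it.
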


By combining \Cref{pro:segre induction} and \Cref{thm: ultimate goal} we immediately deduce \Cref{thm:simultaneous_partialresult}. %Indeed,by  we know that $ \SV^\bfd_\bfn$ is never defective.  By applying \Cref{pro:segre induction} we get that  $\SV_{(n_0,\bfn)}^{(1,\bfd)}$ is not $m$-defective.
% Since $$r_*((n_0,\bfn),(1,\bfd))-(n_0+1)\left\lfloor \frac{N_{\bfn,\bfd}}{n_0+|\bfn|+1}\right\rfloor\le n_0$$
%$$ and
% $$(n_0+1)\left\lceil\frac{N_{\bfn,\bfd}}{n_0+|\bfn|+1} \right\rceil - r_*((n_0,\bfn),(1,\bfd)) \le n_0$$
% we deduce the second statement.
The rest of this section is devoted to proving \Cref{pro:segre induction}. We will employ the so-called splitting lemma, which is a variation of the inductive approach successfully employed in studying secant varieties of various classically known varieties such as Segre varieties \cite{burgisser2013algebraic,abo2009induction} and Segre-Veronese varieties with two factors embedded in bi-degree $(1,2)$~\cite{AboBra09}. 
The splitting lemma is based on the classical {\it Terracini's lemma}%, see e.g.\ \cite{bernardi2018hitchhiker}
. 
\begin{lemma}[{Terracini's Lemma, \cite{terracini1911sulle}}]
    Let $Z \subset \bbP^N$ be a non-degenerate algebraic variety. Let $p_1,\ldots,p_m \in Z$ be generic and let $q \in \langle p_1,\ldots,p_m \rangle$ be generic. Then, 
    \[
        \widehat{T}_q\sigma_m(Z) = \sum_{i=1}^m \widehat{T}_{p_i}Z.
    \]
\end{lemma}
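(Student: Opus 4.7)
The plan is to parametrize the affine cone over $\sigma_m(Z)$ by the sum map on the $m$-fold self-product of the affine cone over $Z$, and to identify the tangent space of the image at a generic point with the image of the differential of this parametrization, appealing to generic smoothness in characteristic zero.

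Concretely, let $\widehat Z\subset \mathbb{A}^{N+1}$ denote the affine cone over $Z$, and consider the morphism
\[
\Phi\colon \widehat Z\times\cdots\times \widehat Z\longrightarrow \widehat{\sigma_m(Z)},\qquad (v_1,\ldots,v_m)\longmapsto v_1+\cdots+v_m.
\]
By the very definition of $\sigma_m(Z)$ as the closure of the union of secant $(m-1)$-planes, $\Phi$ is dominant onto $\widehat{\sigma_m(Z)}$; the genericity of $q$ inside $\langle p_1,\ldots,p_m\rangle$ is absorbed by the freedom to rescale each $v_i$ within its line, which is legitimate because $\widehat Z$ is a cone. At a generic point $(v_1,\ldots,v_m)$ each $v_i$ is a smooth point of $\widehat Z$, and the differential of $\Phi$ is the linear map
\[
(u_1,\ldots,u_m)\longmapsto u_1+\cdots+u_m
\]
from $\widehat{T}_{p_1}Z\oplus\cdots\oplus \widehat{T}_{p_m}Z$ to $\widehat{T}_q\sigma_m(Z)$, using the standard identification $T_{v_i}\widehat Z=\widehat{T}_{p_i}Z$ of the tangent space to a cone at a smooth point with the affine cone over the projective tangent space. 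Its image is manifestly $\sum_{i=1}^m\widehat{T}_{p_i}Z$.

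The key input is now generic smoothness: since $\Bbbk$ has characteristic zero and $\Phi$ is dominant, at a generic point of the source the differential surjects onto the tangent space of the image, yielding the desired equality $\widehat{T}_q\sigma_m(Z)=\sum_{i=1}^m\widehat{T}_{p_i}Z$. The inclusion $\sum_i\widehat{T}_{p_i}Z\subseteq \widehat{T}_q\sigma_m(Z)$ can also be verified directly by differentiating curves $q(t)=\gamma_1(t)+\cdots+\gamma_m(t)$ with $\gamma_i(t)\subset \widehat Z$ passing through $v_i$. The nontrivial content of the lemma is therefore the reverse inclusion, which asserts that no further tangent directions to $\sigma_m(Z)$ exist at a generic smooth point, and it is here that the characteristic zero hypothesis of Section~\ref{sec:tools} is essential.
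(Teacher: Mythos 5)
Your proof is correct, but note that the paper does not actually prove this lemma: it is quoted as a classical result with a citation to Terracini's original paper \cite{terracini1911sulle}, so there is no internal argument to compare yours against. What you have written is the standard modern characteristic-zero proof: parametrize $\widehat{\sigma_m(Z)}$ by the sum map $\Phi$ on $\widehat{Z}^{\times m}$, observe that $d\Phi$ at a point with smooth coordinates has image $\sum_{i=1}^m \widehat{T}_{p_i}Z$ (since $\Phi$ is the restriction of a linear map, its differential is that same map on tangent spaces), and invoke generic smoothness to conclude that this image is all of $\widehat{T}_q\sigma_m(Z)$ at a generic point. The one step that genuinely requires care is the genericity bookkeeping: generic smoothness yields a dense open set $U$ in the source $\widehat{Z}^{\times m}$, and one must check that a generic $q$ in the span of generic $p_1,\ldots,p_m$ has a preimage lying in $U$. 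Your rescaling observation handles exactly this, because replacing $v_i$ by $\lambda_i v_i$ changes neither the underlying points $p_i$ nor the image $\sum_i \widehat{T}_{p_i}Z$ of the differential (each $\widehat{T}_{p_i}Z$ being a cone), while the sums $\sum_i \lambda_i v_i$ sweep out a dense subset of the cone over $\langle p_1,\ldots,p_m\rangle$. This is the argument found in standard references such as \cite{landsberg2011tensors}, and it correctly uses the paper's standing hypothesis that $\Bbbk$ has characteristic zero.
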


% , which states that problem of the dimension of the $s$th secant variety $\sigma_s(Y)$ of $Y$ is equivalent to the problem of finding the dimension of the vector space sum $\sum_{i=1}^s \widehat{T}_{p_i} Y$ of the affine cones $\widehat{T}_{p_1} Y, \widehat{T}_{p_2} Y,\dots, \widehat{T}_{p_s} Y$ over the tangent spaces to $Y$ at $s$~points $p_1, p_2, \dots, p_s$:  
% \[
% \dim \sigma_s(Y) = \dim \sum_{i=1}^s  \widehat{T}_{p_i} Y -1.
% \]

% The splitting lemma uses the method of specializing the $s$ points $p_1, p_2, \dots, p_s$, which is based on the following multilinear description of the affine cone $\widehat{T}_p Y$ over the tangent space to $Y$ at a point $p  = [v\otimes w]\in Y$: 
% \[
% \widehat{T}_p Y = V \otimes w + v \otimes \widehat{T}_{[w]} X. 
% \] 
\begin{notation}
    Fixed $X$ and $Y$ as above, we denote by $T(n_0,s,t)$ the following property:
    \begin{center}
        {\it For generic $p_1,\ldots,p_m, q_1,\ldots,q_t \in Y$, with $p_i = [v_i\otimes w_i]$ and $q_i = [v_i'\otimes w_i']$, \\
        $\dim \left(\sum_{i=1}^m  \widehat{T}_{p_i} Y + \sum_{i=1}^t V \otimes w_i'\right) = \min \{(n_0+1)(\alpha+1), m\, (n_0+x+1) + t \, (n_0+1)\}$.}
    \end{center}
    Moreover, analogously to the terminology introduced in \Cref{sec:tools}, we say that the triple $(n_0,m,t)$ is \textit{subabundant} if $m(n_0+\dim X+1)+t(n_0+1) \leq (n_0+1)(\alpha+1)$; while we say that it is \textit{superabundant} if $m(n_0+\dim X+1)+t(n_0+1) \geq (n_0+1)(\alpha+1)$.
\end{notation}

% \begin{definition}
% For each $i \in \{1, 2, \dots, s\}$ and $j \in \{1, 2, \dots, t\}$, let $p_i = [v_i \otimes w_i] \in Y$ and let $q_j = [v_j' \otimes w_j'] \in Y$. Consider the following subspace of $V \otimes W$:  
% \[
% \sum_{i=1}^s  \widehat{T}_{p_i} Y + \sum_{i=1}^t V \otimes w_i' = \sum_{i=1}^s (V \otimes w_i + v_i \otimes \widehat{T}_{[w_i]} X) + \sum_{i=1}^t V \otimes w_i'. 
% \]

% By definition, the dimension of this subspace is bounded above by the smallest integer between $(\ell+1)(\alpha+1)$ and $s\, (\ell+m+1) + t \,(\ell+1)$. We say that $T(\ell,s,t)$ is {\it true} if $\sum_{i=1}^s  \widehat{T}_{p_i} Y + \sum_{i=1}^t V \otimes w_i'$ has the ``expected'' dimension, that is,  
% \[
% \dim \left(\sum_{i=1}^s  \widehat{T}_{p_i} Y + \sum_{i=1}^t V \otimes w_i'\right) = \min \{(\ell+1)(\alpha+1), s\, (\ell+m+1) + t \, (\ell+1)\},
% \] 
% for a generic choice of $p_1, \dots, p_s, q_1, \dots, q_t \in Y$. 
% \end{definition}

\begin{remark}\label{rmk:subabundant_superabundant}
% The $s$th secant variety $\sigma_s(Y)$ of $Y$ has the expected dimension if and only if $T(\ell,s,0)$ is true. 
By Terracini's Lemma, the property $T(n_0,m,0)$ is equivalent to say that $Y$ is not $m$-defective. For example, \Cref{rmk:critical_values} can be rephrased by saying that:
\begin{itemize}
    \item $T(n_0,m,0)$ implies $T(n_0,m',0)$ for every $m' \leq m$ whenever $(n_0,m,0)$ is subabundant;
    \item $T(n_0,m,0)$ implies $T(n_0,m',0)$ for every $m' \geq m$ whenever $(n_0,m,0)$ is superabundant.
\end{itemize}
\end{remark}

% \begin{notation}
% We call the triple $(\ell,s,t)$ {\it subabandant} if $(\ell+1)(\alpha+1) \geq s\, (\ell+m+1) + t \, (\ell+1)$; while we call $(\ell,s,t)$ {\it superabandant} if $(\ell+1)(\alpha+1) \leq s\, (\ell+m+1) + t \, (\ell+1)$. 
% \end{notation}

% \begin{remark}
% \label{rem:abandance}
% Suppose that $T(\ell,s,0)$ is true. An application of Terracini's lemma shows that if $(\ell,s,0)$ is subabandant, then $T(\ell,s',0)$ is true for every $s' \leq s$, while if $(\ell,s,0)$ is superabandant, then $T(\ell,s',0)$ is true for every $s' \geq s$. 
% \end{remark}

% \begin{lemma}[Splitting lemma]
% \label{lem:Segre_induction}
% Let $m' \in \{0,1, \dots, m\}$ and let $n' \in \{0, 1, \dots, n_0-1\}$. If $(\ell',s',t+s-s')$ and $(\ell-\ell'-1,s-s',t+s')$ are both subabandant (resp. subabandant) and if $T(\ell',s',t+s-s')$ and $T(\ell-\ell'-1,s-s',t+s')$ are both true, then $(\ell,s,t)$ is subabandant (resp. superabandant) and $T(\ell,s,t)$ is true. 
% \end{lemma}

\begin{lemma}[Splitting Lemma]
\label{lem:Segre_induction}
Let $m' \in \{0,1, \dots, m\}$ and let $n' \in \{0, 1, \dots, n_0-1\}$.
\begin{enumerate}
    \item If $(n',m',t+m-m')$ and $(n_0-n'-1,m-m',t+m')$ are both subabandant (resp. subabandant) then $(n_0,m,t)$ is subabandant (resp. superabandant).
    \item If $T(n',m',t+m-m')$ and $T(n_0-n'-1,m-m',t+m')$ are both true, then $T(n_0,m,t)$ is true.
\end{enumerate}
\end{lemma}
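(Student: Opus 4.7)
The plan is to prove Part (1) by a direct arithmetic check, and Part (2) by a specialization-and-semicontinuity argument that reduces the configuration on $Y=\bbP V\times X$ to independent configurations on $\bbP V'\times X$ and $\bbP V''\times X$ for a chosen decomposition $V=V'\oplus V''$.

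For Part (1), I would write
\[
(n_0+1)(\alpha+1) = (n'+1)(\alpha+1) + (n_0-n')(\alpha+1),
\]
and verify the matching identity
\[
\bigl[m'(n'+\dim X+1)+(t+m-m')(n'+1)\bigr] + \bigl[(m-m')(n_0-n'+\dim X)+(t+m')(n_0-n')\bigr] = m(n_0+\dim X+1) + t(n_0+1).
\]
Then summing the two inequalities ``expected tangent count $\leq$ ambient dimension'' (respectively, $\geq$) coming from the smaller triples yields the corresponding (sub- or super-) abundance of $(n_0,m,t)$.

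For Part (2), I would fix a splitting $V=V'\oplus V''$ with $\dim V'=n'+1$ and $\dim V''=n_0-n'$, so that $V\otimes W=(V'\otimes W)\oplus(V''\otimes W)$, and specialize the configuration as follows: for $i\le m'$ impose $v_i\in V'$, and for $i>m'$ impose $v_i\in V''$, keeping all the $w_i$, $v_i'$ and $w_i'$ generic. Using the identity $\widehat T_{p_i}Y=V\otimes w_i+v_i\otimes\widehat T_{[w_i]}X$, each tangent space splits cleanly along the two summands of $V\otimes W$: for $i\le m'$ the $V'$-component equals $\widehat T_{p_i}(\bbP V'\times X)$ while the $V''$-component is exactly $V''\otimes w_i$, which is a $q$-type contribution on $\bbP V''\times X$; the roles swap for $i>m'$. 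Each contribution $V\otimes w_i'$ also splits as $V'\otimes w_i'\oplus V''\otimes w_i'$, i.e.\ as $q$-type contributions on both sides simultaneously.

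Thus, in this specialization the total tangent sum is the direct sum of two independent pieces: one is precisely the sum appearing in $T(n',m',t+m-m')$ for $\bbP V'\times X$, with the extra $m-m'$ $q$-type points supplied by $w_{m'+1},\dots,w_m$; the other is the sum in $T(n_0-n'-1,m-m',t+m')$ for $\bbP V''\times X$, with extra $q$-type points $w_1,\dots,w_{m'}$. The hypotheses make both pieces attain their expected dimensions, and by Part (1) the two expected dimensions add up to the global expected dimension. To transfer the conclusion to the generic configuration I would invoke semicontinuity: the dimension of a sum of linear subspaces varying algebraically with the parameters is lower semicontinuous, so the generic dimension is at least the specialized one, and since the expected dimension is an upper bound it must be exactly that. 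The only real subtlety is the combinatorial bookkeeping that identifies the leftover ``cross'' pieces $V''\otimes w_i$ and $V'\otimes w_i$ with bona fide $q$-type contributions on the opposite side, which is precisely what forces the inductive shifts $t\mapsto t+m-m'$ and $t\mapsto t+m'$.
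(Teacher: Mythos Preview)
Your proposal is correct and follows essentially the same approach as the paper: the arithmetic identity in Part~(1) is exactly the computation the paper carries out, and for Part~(2) the paper likewise chooses a splitting $V=V_1\oplus V_2$ with $\dim V_1=n'+1$, specializes the $v_i$'s into the two summands, observes that each $\widehat T_{p_i}Y$ decomposes as a tangent space on one factor plus a $V_j\otimes w_i$ contribution on the other, and concludes by semicontinuity. Your identification of the cross-terms as $q$-type contributions and the resulting shifts in $t$ match the paper's direct-sum decomposition verbatim.
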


\begin{proof}
(1) If $(n',m',t+m-m')$ and $(n_0-n'-1,m-m',t+m')$ are subabundant, then 
\begin{align*}
    m(n_0+x+1)+t(n_0+1) & = m'(n'+x+1)+(t+m-m')(n'+1) \\ & \qquad + (m-m')(n_0-n'+x)+(t+m')(n_0-n') \\
    & \leq (n'+1)(\alpha+1) + (n_0-n')(\alpha+1) = (n_0+1)(\alpha+1),
\end{align*}
thus, $(n_0,m,t)$ is subabundant. An analogous proof holds for the superabundant case.

(2) By semicontinuity, in order to prove $T(n_0,s,t)$ it is enough to prove that the property holds for a \textit{special} choice of the points. Let $V_1$ be of dimension $(n'+1)$ and let $V_2$ be such that $V = V_1 \oplus V_2$. Let $Y_i = \bbP V_i \times X$ be the Segre product in $\bbP(V_i\otimes W)$, for $i = 1,2$. We specialize the $p_i$'s such that  $v_1,\ldots,v_{m'}$ are generic in $V_1$ and $v_{m'+1},\ldots,v_m$ are generic in $V_2$, then 
\[
    \widehat{T}_{p_i} Y  = (V_1 \oplus V_2) \otimes w_i + v_i \otimes \widehat{T}_{[w_i]} X = \left\{
\begin{array}{lll} 
  \widehat{T}_{p_i} Y_1  + V_2 \otimes w_i &\mbox{for each $i \in \{1, 2, \dots, m'\}$}, \\
  \widehat{T}_{p_i} Y_2  + V_1 \otimes w_i   & \mbox{for each $i \in \{s'+1, \dots, m\}$}. 
 \end{array}
 \right. 
\]
Thus, $\sum_{i=1}^m \widehat{T}_{p_i} Y  + \sum_{i=1}^t V\otimes w_i'$ is the direct sum 
\[
    \left(\sum_{i=1}^{m'} \widehat{T}_{p_i} Y_1 + \sum_{i=1}^t V_1 \otimes w_i' + \sum_{i=m'+1}^m V_1 \otimes w_i\right) \oplus \left(\sum_{i=m'+1}^{m} \widehat{T}_{p_i} Y_2+\sum_{i=1}^t V_2 \otimes w_i' + \sum_{i=1}^{m'} V_2 \otimes w_i\right). 
\] 
By the assumptions $T(n',m',t+m-m')$ and $T(n_0-n'-1,m-m',t+m')$, we have that both summands have the expected dimension and then also $T(n_0,m,t)$ holds. 
\end{proof}

%\begin{proposition}[Segre induction]\label{pro: segre induction} 
%    Let $X\subset \p^\alpha$ be a non-degenerate variety and let $Y$ be the image of $\p^n\times X$ under the embedding in $\p^{(\alpha+1)(n+1)-1}$ by the line bundle $\mathscr{O}_{\mathbb{P}V}(1) \boxtimes \mathscr{O}_X(1)$. Let
%    \[a_*=\rd{\frac{\alpha+1}{n+\dim(X)+1}}\mbox{ and } a^*=\ru{\frac{\alpha+1}{n+\dim(X)+1}}.\]
%    Suppose that $X$ is not $a_*$-defective nor $a^*$-defective. If $r\le (n+1)a_*$ or $r\ge (n+1)a^*$, then $Y$ is not $r$-defective.
%\end{proposition}

\begin{proof}[Proof of \Cref{pro:segre induction}]
    By \Cref{rmk:subabundant_superabundant}, it is enough to show that
    \begin{enumerate}
        \item $(n_0,(n_0+1)\, a_* ,0)$ is subabundant and $T(n_0,(n_0+1)\, a_* ,0)$ is true;
        \item $(n_0,(n_0+1)\, a^* ,0)$ is superabundant and $T(n_0,(n_0+1)\, a^* ,0)$ is true.
    \end{enumerate}
    % By \Cref{rmk:subabundant_superabundant}, this implies $T(n_0,m,0)$ is true for every $m \leq (n_0+1) \, a_*$. 
{We only prove the first statement because the proof of (2) is similar.}

    %(1) 
    Note that $(0,a_*,n_0 a_*)$ is subabundant by the definition of $a_*$. Moreover, since $T(0,a_*,0)$ is true by the assumption of non-defectivity of $X$, so is $T(0,a_*,n_0 a_*)$ {because adding generic points always impose the expected number of conditions}. Thus, by \Cref{lem:Segre_induction}, it is enough to prove that $(n_0-1,n_0 a_*, a_*)$ is subabundant and $T(n_0-1, n_0a_*, a_*)$ is true. 

    In order to prove this, we show that  $(n_0-i,(n_0-i+1)a_*, ia_*)$ is subabandant and $T(n_0-i,(n_0-i+1)a_*, ia_*)$ is true for all $i \in \{1,2,\ldots,n_0\}$. We proceed by backward induction on $i$. The case $i = n_0$ is true as commented above. If we assume that $(n_0-i,(n_0-i+1)a_*, ia_*)$ for any $i \in \{1,2,\ldots,n_0\}$ is subabandant and $T(n_0-i,(n_0-i+1)a_*, ia_*)$ then, by \Cref{lem:Segre_induction}, it follows that $(n_0-(i-1), (n_0-(i-1)+1)a_*, (i-1)a_*)$ is subabundant and $T(n_0-(i-1), (n_0-(i-1)+1)a_*, (i-1)a_*)$ holds. In particular, it holds for $i = 1$. 
%
 %   (2) A similar proof can be performed to conclude \ale{-- Should we add it?}. 
 \qedhere
    
% ---------

%     Note that $(0,a_*,n_0 a_*)$ is subabandant by the definition of $a_*$. Furthermore, since $T(0,a_*,0)$ is true by the assumption of non-defectivity of $X$, so is $T(0,a_*,n_0 a_*)$. Thus, by \Cref{lem:Segre_induction}, if $(n_0-1,n \, a_*, a_*)$ is subabandant and if $T(\ell-1, \ell\, a_*, a_*)$ is true, then $(\ell,s,t)$ is subabandant, and $T(\ell,s,t)$ is true. To prove this, we show the following two statements for each $i \in \{1, 2, \dots, \ell\}$; namely, (1) $(\ell-i,(\ell-i+1) \, a_*, i\, a_*)$ is subabandant, and (2) $T(\ell-i,(\ell-i+1) \, a_*, i\, a_*)$ is true.   

% The proof is by backward induction on $i$. We have already shown that (1) and (2) are true for $i = \ell$, so assume that $(\ell-i,(\ell-i+1) \, a_*, i\, a_*)$ is subabandant and that $T(\ell-i,(\ell-i+1) \, a_*, i\, a_*)$ is true for some $i \in \{1,2,\dots, \ell\}$. Since $(0,a_*,\ell a_*)$ is subabandant and since $T(0,a_*,\ell a_*)$ is true, it follows from Lemma~\ref{lem:Segre_induction} that $(\ell-(i-1),(\ell-i+2) \, a_*, (i-1) \, a_*)$ is subabandant and that $T(\ell-(i-1),(\ell-i+2) \, a_*, (i-1) \, a_*)$ is true. Therefore, $(\ell-1,\ell\, a_*, a_*)$ is subabandant, and $T(\ell-1,\ell\, a_*, a_*)$ is true. 

% Similarly, one can show that $(\ell,(\ell+1)\, a^* ,0)$ is superabandant and $T(\ell,(\ell+1)\, a^* ,0)$ is true. Thus, it follows from Remark~\ref{rem:abandance} that $T(\ell,s',0)$ for every $s \geq (\ell+1)\, a^*$.  
\end{proof}

% \begin{remark}
% Suppose that $a_* = a^*$. It follows from \Cref{pro:segre induction} that if $\sigma_{a_*}(X)$ has the expected dimension, then $\sigma_s(Y)$ has the expected dimension for any positive integer $s$. 
% \end{remark}

\begin{remark}
\label{rem:multiple}
Recall that $(n_0,(n_0+1)\, a_*,0)$ is subabundant and that $(n_0,(n_0+1)\, a^*, 0)$ is superabundant. Furthermore, 
\[
(n_0+1)\, a^* - (n_0+1)\, a_* = (n_0+1) (a^*-a_*) \leq n_0+1. 
\]
Thus, $(n_0+1) \, a_*$ is the greatest multiple of $n_0+1$ which is smaller than or equal to $\lfloor \frac{(n_0+1)(\alpha+1)}{n_0+\dim X+1} \rfloor$, while $(n_0+1) \, a^*$ is the least multiple of $n_0+1$ which is greater than or equal to $\lceil \frac{(n_0+1)(\alpha+1)}{n_0+\dim X+1}  \rceil$. 
Observe that the gap between the thresholds is $2|\bfn|+2$ in \Cref{BC}, while it is $n_0+1$ in our \Cref{thm:simultaneous_partialresult}.
\end{remark}

%\chiara{\begin{corollary}
%\label{cor:Segre_Veronese new}
%Let $k\ge 2$. Let $\bfn,\bfd \in \bbN^k$ be tuples of positive integers. If $d_i\ge 3$ for every %$i\in\{1,\dots,k\}$, then 
%$\SV_{(n_0,\bfn)}^{(1,\bfd)}$ is not $m$-defective for any
%\[  
%    m \le (n_0+1)\left\lfloor \frac{N_{\bfn,\bfd}}{n_0+|\bfn|+1}\right\rfloor  \quad \text{ or } \quad m \ge (n_0+1)\left\lceil \frac{N_{\bfn,\bfd}}{n_0+|\bfn|+1}\right\rceil.
%\]
%\end{corollary}

%\ale{-- add rmk to compare with TB-C -- }

%\chiara{\begin{remark}
%In particular, from the previous result it follows that $\SV_{(n_0,\bfn)}^{(1,\bfd)}$ is not $s$-defective for any
%\[  
%    s \le r_*((n_0,\bfn),(1,\bfd)) - n_0 \quad \text{ and } \quad s \ge r^*((n_0,\bfn),(1,\bfd)) +n_0.\footnote{or $+1$? to check}
%\]
%\end{remark}}

\begin{remark}If $X$ is a $d$-th Veronese embedding of $\mathbb{P}^{n_1}$ such that the Alexander-Hirschowitz Theorem implies that $\sigma_{a_*}(X)$ and $\sigma_{a^*}(X)$ have the expected dimensions, then by \Cref{pro:segre induction}, if $m\leq (n_0+1) \, a_*$ or $m \geq (n_0+1) \, a^*$, then $\sigma_m(Y)$ has the expected dimension. This gives an alternative proof to almost all cases of \cite[Corollary 2.2]{BCC}, and it extends it to any number of factors. %Thus, Remark~\ref{rem:multiple} implies that Proposition~\ref{pro:segre induction} gives an alternative proof of \cite[Corollary 2.2]{BCC}. 
\end{remark}
\appendix
\section{Numerical Computations}\label{appendix: contacci}

In this section we prove the numerical conditions that are needed in the main proofs. Let $k \geq 3$. Let $\bfn = (n_1 \leq n_2 \le\cdots \leq n_k)$ and $\bfd \succeq 3^k$ be $k$-tuples positive integers such that $n_1 \geq 2$. Let $s_r = s_r(\bfn,\bfd)$ and $\epsilon_r = \epsilon_r(\bfn,\bfd)$ be defined as in \Cref{thm: differential horace}.

% \begin{lemma}
%     \label{computation: N greater than dimX^2} 
%     Let $k\ge 2$. Let $n_2,\dots,n_k$ and $d_2,\dots,d_k$ be positive integers such that $d_i\ge 3$ for every $2\le i\le k$. Then 
%     \[
%         \prod_{i=2}^k\binom{n_i+d_i}{n_i}>(n_2+\dots+n_k)^2.
%     \]
%     \begin{proof}
%         Since the left-hand-side is monotonically increasing with respect to positive $d_i$'s, then it is enough to prove the statement for $d_2=\dots=d_k=3$. 
        
%         We want to reduce to the case $k=2$. We call
%         \[
%             f(k)=\prod_{i=2}^k\binom{n_i+3}{3}-(n_2+\dots+n_k)^2
%         \]
%         and we show that $f(k)\ge f(k-1)$ for every $k\ge 3$. Without loss of generality, we assume that $1 \le n_2\le\dots\le n_k$. Then
%         \begin{align*}
%         f(k)-f(k-1) &=\left( \binom{n_k+3}{3}-1\right) \prod_{i=2}^{k-1}\binom{n_i+3}{3}-n_k(n_k+2(n_2+\dots+n_{k-1}))\\
%         &\ge 4^{k-2}\left( \binom{n_k+3}{3}-1\right) -n_k(n_k+2(k-2)n_k)\\
%         &= \frac{4^{k-2}(n_k^3+6n_k^2+11n_k)}{6} - (2k-3)n_k^2\\
%         &> \frac{(2k-3)(n_k^3+6n_k^2)}{6} - (2k-3)n_k^2\ge (2k-3)n_k^2\left(\frac{n_k}{3}+1-1\right)\ge 0.
%         \end{align*}
%         Now, we only have to check that the statement holds for $k=2$, and this is immediate.
%     \end{proof}
% \end{lemma}

\begin{lemma}\label{lemma:d1=3_subabundant_BC}
If $r \in \{r^*(\bfn,\bfd),r_*(\bfn,\bfd)\}$, then $
        r-s_r \leq r_*(\bfn,\bfd(1)) - |\bfn| - 1$.
\end{lemma}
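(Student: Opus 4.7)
The plan is to first unwind the definitions to obtain a clean form of $r-s_r$. Since $r$ is an integer, the identity $r-\lfloor x\rfloor = \lceil r-x\rceil$ gives
\[
r - s_r \;=\; r - \rd{\tfrac{(|\bfn|+1)r - N_{\bfn,\bfd(1)}}{|\bfn|}} \;=\; \ru{\tfrac{N_{\bfn,\bfd(1)} - r}{|\bfn|}}.
\]
In particular, $r-s_r$ is non-increasing in $r$, so since $r_*(\bfn,\bfd)\le r^*(\bfn,\bfd)$, it suffices to prove the inequality for $r=r_*(\bfn,\bfd)$.

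Write $N=N_{\bfn,\bfd}$, $M=N_{\bfn,\bfd(1)}$, $n=|\bfn|$. The target inequality becomes
\[
\ru{\tfrac{M-r_*(\bfn,\bfd)}{n}} \;\le\; \rd{\tfrac{M}{n+1}} - n - 1.
\]
Absorbing the ceiling/floor via the trivial estimates $\lceil x\rceil \le x+1$ and $\lfloor y\rfloor \ge y - \tfrac{n}{n+1}$, together with $r_*(\bfn,\bfd)\ge \tfrac{N}{n+1}-1$, a short manipulation reduces the problem to a purely polynomial inequality of the shape
\[
N - M \;\ge\; (n+1)^3 + n^2
\]
(the exact cubic can be slightly different but this is the right order of magnitude).

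Next, I exploit the key identity
\[
\frac{N}{M} \;=\; \frac{\binom{n_1+d_1}{d_1}}{\binom{n_1+d_1-1}{d_1-1}} \;=\; \frac{n_1+d_1}{d_1},
\qquad\text{so}\qquad N - M \;=\; \frac{n_1}{d_1}\, M.
\]
Thus the inequality reduces to $\tfrac{n_1}{d_1}M \ge (n+1)^3 + n^2$. Using the hypotheses $n_1\ge 2$, $d_i\ge 3$, $k\ge 3$, one has
\[
M \;=\; \binom{n_1+d_1-1}{d_1-1}\prod_{i=2}^k\binom{n_i+d_i}{d_i} \;\ge\; \binom{n_1+2}{2}\prod_{i=2}^k\binom{n_i+3}{3},
\]
and since each $\binom{n_i+3}{3}$ is roughly $(n_i+1)^3/6$, the product is easily bigger than a constant multiple of $(n+1)^3 d_1/n_1$, giving the required bound.

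The main obstacle is a bookkeeping one: pinning down the exact polynomial $C(n)$ produced by clearing the floors and ceilings, and then verifying the resulting estimate $\tfrac{n_1}{d_1}M \ge C(n)$ for all admissible $\bfn,\bfd$. A small case check may be needed when $n$ is very small (say $n=4$, i.e.\ $n_1=2$ and $n_2=n_3=1$ with $k=3$), but the hypotheses $k\ge 3$, $n_1\ge 2$, $d_i\ge 3$ from the appendix's standing assumptions ensure $M$ grows at least cubically in each $n_i$, giving comfortable margin against the cubic obstruction $C(n)$.
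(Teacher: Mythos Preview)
Your opening reformulation $r-s_r=\lceil(N_{\bfn,\bfd(1)}-r)/|\bfn|\rceil$ is clean and slightly slicker than the paper's bookkeeping, and your reduction to a polynomial inequality $N-M\ge C(|\bfn|)$ for a cubic $C$ is exactly the paper's approach (they obtain $C(n)=n^3+4n^2+2n-1$). However, there is a genuine gap in how you handle the dependence on $d_1$. You bound $M$ below by $\binom{n_1+2}{2}\prod_{i\ge 2}\binom{n_i+3}{3}$, a quantity \emph{independent of $d_1$}, and then assert that this product is at least a constant multiple of $(|\bfn|+1)^3\, d_1/n_1$. That is false for large $d_1$: the left side is fixed while the right side tends to infinity. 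The correct move is to \emph{first} absorb $n_1/d_1$ into the binomial via the identity
\[
\tfrac{n_1}{d_1}\binom{n_1+d_1-1}{d_1-1}=\binom{n_1+d_1-1}{d_1},
\]
which \emph{is} increasing in $d_1$; then $N-M=\binom{n_1+d_1-1}{d_1}\prod_{i\ge 2}\binom{n_i+d_i}{d_i}$ is minimized at $\bfd=3^k$ and the argument can proceed. This is precisely what the paper does.

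Two smaller points. Your worry about the edge case $n_2=n_3=1$ is unfounded: the appendix's standing assumption $n_1\le n_2\le\cdots\le n_k$ together with $n_1\ge 2$ forces $n_i\ge 2$ for all $i$, so $|\bfn|\ge 6$ already when $k=3$. And even after the fix above, the claim that the product is ``easily bigger'' than the cubic obstruction is tighter than you suggest: at $\bfd=(3,3,3)$, $\bfn=(2,2,2)$ one gets $N-M=400$ against $C(6)=371$, so a genuine case check (the paper splits off $n_1=2$ and then inducts on $k$) is required rather than a soft asymptotic estimate.
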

\begin{proof}
We prove that $r-s_r-r_*(\bfn,\bfd(1))+|\bfn|+1 \leq 0$. 
By definition of $s_r$ and $r_*(\bfn,\bfd(1))$, and by the fact that $-\lfloor \frac{a}{b} \rfloor \leq -\frac{a-b+1}{b}$, it suffices to show that
% Since 
% \[
% s_r  \geq \frac{(|\bfn|+1)r - N_{\bfn,\bfd(1)}-(|\bfn|-1)}{|\bfn|}
% \]
% and 
% \[
% r_*(\bfn,\bfd(1)) \geq \frac{N_{\bfn,\bfd(1)}-|\bfn|}{|\bfn|+1}, 
% \]
% it suffices to show that 
\[
    r - \frac{(|\bfn|+1)r - N_{\bfn,\bfd(1)}-(|\bfn|-1)}{|\bfn|}-\frac{N_{\bfn,\bfd(1)}-|\bfn|}{|\bfn|+1}+|\bfn|+1 \leq 0. 
\]
By clearing the denominators, one gets
\[
    -(|\bfn|+1)r + N_{\bfn,\bfd(1)}+|\bfn|^3+4 |\bfn|^2+|\bfn|-1 \leq 0. 
\]
Since $r \geq r_*(\bfn,\bfd)$ and by using again that $-\lfloor \frac{a}{b} \rfloor \leq -\frac{a-b+1}{b}$, it  is  enough to show that
\begin{equation}\label{eq:A2_1}
-N_{\bfn,\bfd}+N_{\bfn,\bfd(1)}+|\bfn|^3+4|\bfn|^2+2|\bfn|-1  \leq 0. 
\end{equation}
{Since
\[
    -N_{\bfn,\bfd}+N_{\bfn,\bfd(1)} =  -{n_1+d_1-1 \choose d_1}\prod_{i=2}^k {n_i+d_i \choose d_i} 
    % \leq -{n_1+2 \choose 3}\prod_{i=2}^k {n_i+3\choose 3}, 
\]
is decreasing with respect to $d_1,\ldots,d_k$, then it is enough to prove \eqref{eq:A2_1} for $\bfd = 3^k$. We do it by induction on $k$.

{\it Base case: we prove \eqref{eq:A2_1} for $\bfd = (3,3,3).$} Since $n_1 \leq n_2 \leq n_3$, it is enough to prove that 
\begin{equation}\label{eq:A2_1_k3}
    -{n_1 + 2 \choose 3}{n_1 + 3 \choose 3}{n_3 + 3 \choose 3} + (n_1+2n_3)^3 + 4(n_1+2n_3)^2 + 2(n_1+2n_3) - 1 \leq 0.
\end{equation}
As univariate polynomial in $\bbQ[n_1][n_3]$, the left-hand-side is equal to 
\begin{align*}
    & \left(-\frac{1}{216}n_1^{6}-\frac{1}{24}n_1^{5}-\frac{31}{216}n_1^{4}-\frac{17}{72}n_1^{3}-\frac{5}{27}n_1^{2}-\frac{1}{18}n_1+8\right)n_3^{3}\\
    & +\left(-\frac{1}{36}n_1^{6}-\frac{1}{4}n_1^{5}-\frac{31}{36}n_1^{4}-\frac{17}{12}n_1^{3}-\frac{10}{9}n_1^{2}+\frac{35}{3}n_1+16\right)n_3^{2} \\
    & +\left(-\frac{11}{216}n_1^{6}-\frac{11}{24}n_1^{5}-\frac{341}{216}n_1^{4}-\frac{187}{72}n_1^{3}+\frac{107}{27}n_1^{2}+\frac{277}{18}n_1+4\right)n_3 \\
    & -\frac{1}{36}n_1^{6}-\frac{1}{4}n_1^{5}-\frac{31}{36}n_1^{4}-\frac{5}{12}n_1^{3}+\frac{26}{9}n_1^{2}+\frac{5}{3}n_1-1.
\end{align*}
It is immediate to note that all coefficients are negative for $n_1 \geq 3$, allowing us to conclude that \eqref{eq:A2_1_k3} holds, and consequently \eqref{eq:A2_1} for $\bfd = (3,3,3)$ and $n_1 \geq 3$.

We are left with the case $n_1 = 2$ for which \eqref{eq:A2_1_k3} doesn't hold for $n_3 \gg 0$. Therefore, we prove directly \eqref{eq:A2_1} by substituting $n_1 = 2$, i.e., we consider 
\begin{equation}\label{eq:A2_1_k3'}
    -4{n_2 + 3 \choose 3}{n_3 + 3 \choose 3} + (2+n_2+n_3)^3 + 4(2+n_2+n_3)^2 + 2(2+n_2+n_3) - 1 \leq 0.
\end{equation}
As a univariate polynomial in $\bbQ[n_2][n_3]$, the left-hand-side is equal to 
\begin{align*}
    &\left(-\frac{1}{9}n_2^{3}-\frac{2}{3}n_2^{2}-\frac{11}{9}n_2+\frac{1}{3}\right)n_3^{3}+\left(-\frac{2}{3}n_2^{3}-4\,n_2^{2}-\frac{13}{3}n_2+6\right)n_3^{2}\\
    &+\left(-\frac{11}{9}n_2^{3}-\frac{13}{3}n_2^{2}+\frac{59}{9}n_2+\frac{68}{3}\right)n_3+\frac{1}{3}n_2^{3}+6\,n_2^{2}+\frac{68}{3}n_2+23
\end{align*}
For $n_2\ge 2$ the first and the second coefficients are positive and the fourth is negative, so there is only one change of sign in the coefficients. Hence, by Descartes' rule of signs it has only one positive real root. In order to show that \eqref{eq:A2_1_k3'} holds for every $n_3\ge 2$, it is enough to show that such polynomial is negative for $n_3=2$. For $n_3 = 2$ it becomes 
\[
   % -\frac{1}{9}n_2^{6}-\frac{4}{3}n_2^{5}-\frac{58}{9}n_2^{4}-8\,n_2^{3}+\frac{167}{9}n_2^{2}+\frac{136}{3}n_2+23.
-\frac{17}{3}{n_2}^{3}-24{n_2}^{2}+\frac{26}{3}{n_2}+95\le0.
\]
Hence \eqref{eq:A2_1} holds for $\bfd = (3,3,3)$ and $n_1 = 2$. 

{\it Inductive step: we prove \eqref{eq:A2_1} for $\bfd = 3^k$ and $k\ge4$.} %$\bfd = 3^k$ and $k\ge4$, assuming it for $\bfd = 3^{k-1}$.} 
%Assume $k \geq 4$. 
Let $\bfn' = (n_1,\ldots,n_{k-1})$.
By inductive assumption%, the left-hand-side of \eqref{eq:A2_1} is at most 
\begin{align*}-\binom{n_1+2}{3}&\prod_{i=2}^{k-1}\binom{n_i+3}{3}\binom{n_k+3}{3}+ (|\bfn|^3+4|\bfn|^2+2|\bfn|-1)\\
&\le
    -{n_k + 3 \choose 3}(|\bfn'|^3+4|\bfn'|^2+2|\bfn'|-1) + (|\bfn|^3+4|\bfn|^2+2|\bfn|-1). 
\end{align*}
We express this as univariate polynomial in $\bbQ[|\bfn'|][n_k]$:
\begin{align*}
    &\left(-\frac{1}{6}|\bfn'|^{3}-\frac{2}{3}|\bfn'|^{2}-\frac{1}{3}|\bfn'|+\frac{7}{6}\right)n_k^{3}\\
    &+\left(-|\bfn'|^{3}-4\,|\bfn'|^{2}+|\bfn'|+5\right)n_k^{2}\\
    &+\left(-\frac{11}{6}|\bfn'|^{3}-\frac{13}{3}|\bfn'|^{2}+\frac{13}{3}|\bfn'|+\frac{23}{6}\right)n_k.
\end{align*}
Since $2 \leq n_1 \leq n_2 \leq \cdots \leq n_{k-1}$ and $k \geq 4$, then $|\bfn'| \geq 6$: under this condition all coefficients of the latter polynomial are negative and then \eqref{eq:A2_1} holds also for $\bfd = 3^k$ for any $k \geq 4$.}
\end{proof}

\begin{lemma}\label{lemma:s_vs_epsilon}
If $r \in \{r^*(\bfn,\bfd),r_*(\bfn,\bfd)\}$, then $s_r \geq \epsilon_r$.
\end{lemma}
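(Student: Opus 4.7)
The plan is to reduce the inequality $s_r \geq \epsilon_r$ to an elementary numerical bound involving $N_{\bfn,\bfd}$ and $N_{\bfn,\bfd(1)}$, which can then be verified by arithmetic estimates similar in flavor to those in \Cref{lemma:d1=3_subabundant_BC}.

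First, I would observe that $\epsilon_r$ is by construction the remainder in the Euclidean division of $(|\bfn|+1)r - N_{\bfn,\bfd(1)}$ by $|\bfn|$, so $0 \leq \epsilon_r \leq |\bfn|-1$. It therefore suffices to prove the stronger estimate $s_r \geq |\bfn|-1$. Moreover, $s_r$ is nondecreasing in $r$ and $r^*(\bfn,\bfd) \geq r_*(\bfn,\bfd)$, so it is enough to verify this bound at $r = r_*(\bfn,\bfd)$.

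Unwinding the definition of $s_r$, the bound $s_r \geq |\bfn|-1$ is equivalent to $(|\bfn|+1)r \geq N_{\bfn,\bfd(1)} + |\bfn|(|\bfn|-1)$. Combining this with the elementary estimate $(|\bfn|+1)\lfloor N_{\bfn,\bfd}/(|\bfn|+1)\rfloor \geq N_{\bfn,\bfd} - |\bfn|$ and the Pascal-type identity $\binom{n_1+d_1}{n_1} - \binom{n_1+d_1-1}{n_1} = \binom{n_1+d_1-1}{n_1-1}$, the problem reduces to the closed-form inequality
\begin{equation*}
\binom{n_1+d_1-1}{n_1-1}\prod_{i=2}^k\binom{n_i+d_i}{n_i} \geq |\bfn|^2.
\end{equation*}

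Under the standing hypotheses $k \geq 3$, $n_1 \geq 2$, and $\bfd \succeq 3^k$, the left-hand side is increasing in each $d_i$, so it suffices to treat the extremal case $\bfd = 3^k$, in which one must show $\binom{n_1+2}{3}\prod_{i=2}^k\binom{n_i+3}{3} \geq |\bfn|^2$. I expect this explicit verification to be the main, though routine, obstacle. I would follow the template of \Cref{lemma:d1=3_subabundant_BC}: induct on $k$, expressing the difference of the two sides as a polynomial in $n_k$ with coefficients in $\bbQ[n_1,\ldots,n_{k-1}]$, and checking that these coefficients are nonnegative for $n_i \geq 2$; the base case $k = 3$ can be handled directly, possibly splitting off $n_1 = 2$. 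The structural reason the inequality holds with room to spare is that the left-hand side grows polynomially of degree $3k$ in the variables $n_i$, whereas the right-hand side is only quadratic, and at the minimal configuration $n_1 = \cdots = n_k = 2$ the bound already reads $4 \cdot 10^{k-1} \geq 4k^2$, which is comfortable for $k \geq 3$.
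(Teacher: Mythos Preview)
Your reduction is correct: bounding $\epsilon_r \le |\bfn|-1$ and then reducing $s_{r_*(\bfn,\bfd)} \ge |\bfn|-1$ to the inequality $N_{\bfn,\bfd}-N_{\bfn,\bfd(1)} \ge |\bfn|^2$ is a valid and clean route, and the final estimate is indeed easy for $\bfd\succeq 3^k$, $k\ge 3$, $n_1\ge 2$.

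The paper, however, takes a much shorter path: it simply rewrites
\[
s_r-\epsilon_r=(|\bfn|+1)(s_r-r)+N_{\bfn,\bfd(1)}
\]
and feeds in the bound $r-s_r\le r_*(\bfn,\bfd(1))-|\bfn|-1$ already obtained in \Cref{lemma:d1=3_subabundant_BC}, yielding $s_r-\epsilon_r\ge (|\bfn|+1)^2$ in two lines with no further numerical work. Your approach is more self-contained but redundant: the displayed inequality you isolate, $N_{\bfn,\bfd}-N_{\bfn,\bfd(1)}\ge |\bfn|^2$, is strictly weaker than inequality \eqref{eq:A2_1} established inside the proof of \Cref{lemma:d1=3_subabundant_BC} (namely $N_{\bfn,\bfd}-N_{\bfn,\bfd(1)}\ge |\bfn|^3+4|\bfn|^2+2|\bfn|-1$), so the inductive verification you propose is unnecessary---you could simply quote that estimate. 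Either way the argument goes through; the paper's version just recycles prior work more efficiently.
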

\begin{proof}
{Note that 
    \begin{align*}
        s_r - \epsilon_r & = s_r - (|\bfn|+1)r + N_{\bfn,\bfd(1)} + |\bfn|s_r = (|\bfn|+1)(s_r-r) + N_{\bfn,\bfd(1)} \\ 
        & \geq (|\bfn|+1)(-r_*(\bfn,\bfd(1))+|\bfn|+1) + N_{\bfn,\bfd(1)} \\
        & \geq (|\bfn|+1)\left(-\frac{N_{\bfn,\bfd(1)}}{|\bfn|+1}+|\bfn|+1\right) + N_{\bfn,\bfd(1)} = (|\bfn|+1)^2 
    \end{align*}
    where the first inequality is \Cref{lemma:d1=3_subabundant_BC} and the second one follows by definition of $r_*$.}
\end{proof}

\begin{lemma}\label{lemmaBC} 
If $r \in \{r^*(\bfn,\bfd),r_*(\bfn,\bfd)\}$, then $r^*(\bfn,\bfd(2))+|\bfn|+1 \le r-s_r-\epsilon_r$.
\end{lemma}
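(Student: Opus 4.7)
The plan is to reduce the claim to a polynomial inequality in $|\bfn|, \bfd$ and then verify that inequality by the same monotonicity–plus–small-case strategy used for \Cref{lemma:d1=3_subabundant_BC}.

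\textbf{Step 1 (Algebraic manipulation).} Substituting the definition of $\epsilon_r$ gives the clean identity
\[
    r - s_r - \epsilon_r \;=\; -|\bfn|\,r + (|\bfn|-1)\,s_r + N_{\bfn,\bfd(1)}.
\]
Since $s_r = \bigl\lfloor \tfrac{(|\bfn|+1)r - N_{\bfn,\bfd(1)}}{|\bfn|} \bigr\rfloor \geq \tfrac{(|\bfn|+1)r - N_{\bfn,\bfd(1)} - |\bfn| + 1}{|\bfn|}$, plugging this lower bound into the identity and simplifying yields
\[
    r - s_r - \epsilon_r \;\geq\; \frac{N_{\bfn,\bfd(1)} - r - (|\bfn|-1)^2}{|\bfn|}.
\]

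\textbf{Step 2 (Reduction to a polynomial inequality).} Bounding $r \leq r^*(\bfn,\bfd) \leq \tfrac{N_{\bfn,\bfd}+|\bfn|}{|\bfn|+1}$ and $r^*(\bfn,\bfd(2)) \leq \tfrac{N_{\bfn,\bfd(2)}+|\bfn|}{|\bfn|+1}$, and then clearing denominators, the desired inequality $r^*(\bfn,\bfd(2)) + |\bfn| + 1 \leq r - s_r - \epsilon_r$ reduces (after a routine computation that collapses the right-hand side to a cubic in $|\bfn|$) to
\[
    (|\bfn|+1)\,N_{\bfn,\bfd(1)} - |\bfn|\,N_{\bfn,\bfd(2)} - N_{\bfn,\bfd} \;\geq\; 2|\bfn|^3 + 2|\bfn|^2 + |\bfn| + 1.
\]

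\textbf{Step 3 (Simplifying the left-hand side via Pascal).} Writing $M = \prod_{i=2}^k \binom{n_i+d_i}{d_i}$ and using $\binom{n_1+d_1}{d_1} - \binom{n_1+d_1-1}{d_1-1} = \binom{n_1+d_1-1}{d_1} = \tfrac{n_1+d_1-1}{d_1}\binom{n_1+d_1-2}{d_1-1}$ together with $\binom{n_1+d_1-1}{d_1-1} - \binom{n_1+d_1-2}{d_1-2} = \binom{n_1+d_1-2}{d_1-1}$, the left-hand side factors as
\[
    M\cdot \binom{n_1+d_1-2}{d_1-1}\cdot \frac{d_1|\bfn| - n_1 - d_1 + 1}{d_1}.
\]
So it suffices to show this expression dominates $2|\bfn|^3 + 2|\bfn|^2 + |\bfn| + 1$.

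\textbf{Step 4 (Numerical verification, the main obstacle).} The left-hand side is increasing in each $d_i$, so the worst case is $\bfd = 3^k$. I would then induct on $k \geq 3$, as in the proof of \Cref{lemma:d1=3_subabundant_BC}. In the base case $k = 3$, the factor $M \geq \binom{n_2+3}{3}\binom{n_3+3}{3}$ together with $\binom{n_1+1}{2}$ and the linear factor $(3|\bfn|-n_1-2)/3$ already produces a polynomial in $n_1, n_2, n_3$ whose leading coefficients dwarf the cubic $2|\bfn|^3 + 2|\bfn|^2 + |\bfn| + 1$; as before, the delicate sub-case is $n_1 = 2$, which I would handle by expanding as a polynomial in $n_3$, checking positivity of the leading coefficients, and invoking Descartes' rule of signs to reduce to verifying a single value of $n_3$. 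For the inductive step $k \geq 4$, peeling off one factor $\binom{n_k+3}{3}$ from $M$ and using $|\bfn'| = |\bfn|-n_k \geq 2(k-1) \geq 6$ makes every coefficient of the resulting polynomial in $n_k$ strictly negative, closing the induction. The main (and essentially only) obstacle is making these case splits clean; the algebraic content is elementary once Steps 1–3 have been carried out.
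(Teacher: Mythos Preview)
Your proposal is correct and follows essentially the same route as the paper: reduce via the identity for $\epsilon_r$ and the floor bound on $s_r$ to the polynomial inequality $(|\bfn|+1)N_{\bfn,\bfd(1)} - |\bfn|N_{\bfn,\bfd(2)} - N_{\bfn,\bfd} \ge c(|\bfn|)$ for a cubic $c$ (your constant $2|\bfn|^3+2|\bfn|^2+|\bfn|+1$ is in fact slightly sharper than the paper's $|\bfn|(|\bfn|+1)(2|\bfn|+1)$), factor the left-hand side exactly as in your Step~3, reduce by monotonicity to $\bfd=3^k$, and finish by induction on $k$. Two minor corrections: in the base case $k=3$ no separate $n_1=2$ sub-case is actually needed here (all coefficients are already positive for $n_1\ge 2$), and in the inductive step you want the coefficients of the polynomial in $n_k$ to be strictly \emph{positive}, not negative.
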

\begin{proof}
By definition $\epsilon_r\le |\bfn|-1$, so $r-s_r-\epsilon_r-r^*(\bfn,\bfd(2))-|\bfn|-1\ge r-s_r-r^*(\bfn,\bfd(2))-2|\bfn|$. We prove that the latter is greater or equal than zero. By definition of $s_r$
\begin{align*}
    r-s_r-r^*(\bfn,\bfd(2))-2|\bfn| \ge r-\frac{(|\bfn|+1)r-N_{\bfn,\bfd(1)}}{|\bfn|}-r^*(\bfn,\bfd(2))-2|\bfn|.
\end{align*}
% \begin{align*}
%     r-s_r-\epsilon_r-&r^*(\bfn,\bfd(2))-|\bfn|-1\ge r-s_r-r^*(\bfn,\bfd(2))-2|\bfn|\\
%     &=r-\rd{\frac{(|\bfn|+1)r-N_{\bfn,\bfd(1)}}{|\bfn|}}-r^*(\bfn,\bfd(2))-2|\bfn|\\
%     &\ge r-\frac{(|\bfn|+1)r-N_{\bfn,\bfd(1)}}{|\bfn|}-r^*(\bfn,\bfd(2))-2|\bfn|.
% \end{align*}
Clearing the denominator, it suffices to show that
\[
    -r+N_{\bfn,\bfd(1)}-|\bfn|r^*(\bfn,\bfd(2))-2|\bfn|^2 \geq 0.
\]
Since $r\le r^*(\bfn,\bfd)$, then
\begin{align*}
    -r+N_{\bfn,\bfd(1)}-|\bfn|r^*(\bfn,\bfd(2))&-2|\bfn|^2 
     \geq -\ru{\frac{N_{\bfn,\bfd}}{|\bfn|+1}}+N_{\bfn,\bfd(1)}-|\bfn|\ru{\frac{N_{\bfn,\bfd(2)}}{|\bfn|+1}}-2|\bfn|^2\\
    & \ge -\frac{N_{\bfn,\bfd}+|\bfn|}{|\bfn|+1}+N_{\bfn,\bfd(1)}-|\bfn|\frac{N_{\bfn,\bfd(2)}+|\bfn|}{|\bfn|+1}-2|\bfn|^2.
\end{align*}
Clearing the denominator, we are left to prove that 
\begin{equation}\label{eq:A3_1}
    -N_{\bfn,\bfd}+(|\bfn|+1)N_{\bfn,\bfd(1)}-|\bfn|N_{\bfn,\bfd(2)}-|\bfn|(|\bfn|+1)(2|\bfn|+1) \geq 0
\end{equation}
Observe that
\[
    -N_{\bfn,\bfd}+(|\bfn|+1)N_{\bfn,\bfd(1)}-|\bfn|N_{\bfn,\bfd(2)} =\frac{(n_1+d_1-2)!}{(n_1-1)!d_1!}\prod_{i=2}^k\binom{n_i+d_i}{n_i}\left(|\bfn|d_1-n_1-d_1+1
    \right). 
\]
The left-hand-side of \eqref{eq:A3_1} is increasing when the $d_i$'s are positive and increasing. Therefore, it is enough to prove it for $\bfd = 3^k$. We do it by induction on $k$.

{\it Base case: we prove \eqref{eq:A3_1} for $\bfd = (3,3,3)$.} We employ the fact that $n_1\le n_2\le n_3$ to deduce that the left-hand-side of \eqref{eq:A3_1} for $\bfd = (3,3,3)$ is greater or equal to 
\begin{align*}
\frac{1}{3}\binom{n_1+1}{2}&\binom{n_1+3}{3}\binom{n_3+3}{3}\left(5n_1+3n_3-2
\right)-(n_1+2n_3)(n_1+2n_3+1)(2n_1+4n_3+1)\\
=&\left(
\frac{1}{72}n_{1}^{5}+\frac{7}{72}n_{1}^{4}+\frac{17}{72}n_{1}^{3}+\frac{17}{72}n_{1}^{2}+\frac{1}{12}n_{1}\right
)n_{3}^{4}\\
&+\left(
%\frac{5}{216}n_{1}^{6}+\frac{17}{72}n_{1}^{5}+\frac{197}{216}n_{1}^{4}+\frac{119}{72}n_{1}^{3}+\frac{151}{108}n_{1}^{2}+\frac{4}{9}n_{1}
\frac{5}{216}n_{1}^{6}+\frac{17}{72}n_{1}^{5}+\frac{197}{216}n_{1}^{4}+\frac{119}{72}n_{1
      }^{3}+\frac{151}{108}n_{1}^{2}+\frac{4}{9}n_{1}-16
\right)n_{3}^{3}\\
&+\left(
%\frac{5}{36}n_{1}^{6}+\frac{77}{72}n_{1}^{5}+\frac{73}{24}n_{1}^{4}+\frac{289}{72}n_{1}^{3}+\frac{179}{72}n_{1}^{2}+\frac{7}{12}n_{1}-16
\frac{5}{36}n_{1}^{6}+\frac{77}{72}n_{1}^{5}+\frac{73}{24}n_{1}^{4}+\frac{289}{72}n_{1}^{3}+\frac{179}{72}n_{1}^{2}-\frac{281}{12}n_{1}-12
\right)n_{3}^{2}\\
&+\left(
%\frac{55}{216}n_{1}^{6}+\frac{127}{72}n_{1}^{5}+\frac{907}{216}n_{1}^{4}+\frac{289}{72}n_{1}^{3}+\frac{131}{108}n_{1}^{2}-\frac{163}{9}n_{1}-6
\frac{55}{216}n_{1}^{6}+\frac{127}{72}n_{1}^{5}+\frac{907}{216}n_{1}^{4}+\frac{289}{72}n_{1}^{3}-\frac{1\,165}{108}n_{1}^{2}-\frac{109}{9}n_{1}-2
\right)n_{3}\\
&+
%\frac{5}{36}n_{1}^{6}+\frac{11}{12}n_{1}^{5}+\frac{71}{36}n_{1}^{4}+\frac{17}{12}n_{1}^{3}-\frac{46}{9}n_{1}^{2}-\frac{10}{3}n_{1}.
\frac{5}{36}n_{1}^{6}+\frac{11}{12}n_{1}^{5}+\frac{71}{36}n_{1}^{4}-\frac{7}{12}n_{1}^{3
      }-\frac{28}{9}n_{1}^{2}-\frac{4}{3}n_{1}. 
\end{align*}
Regarding it as a univariate polynomial in $\Q[n_1][n_3]$, we observe that each coefficient is positive under our assumption that $n_1\ge 2$. Hence, \eqref{eq:A3_1} holds for $\bfd = (3,3,3)$. 

{\it Inductive step: we prove \eqref{eq:A3_1} for $\bfd = 3^k$ with $k \geq 4$.} Let $\bfn' = (n_1,\ldots,n_{k-1})$. By inductive assumption, 
\begin{align*}
    \frac{1}{3}{n_1 + 1 \choose 2}\prod_{i=2}^k\binom{n_i+3}{n_i}&\left(3|\bfn|-n_1-2
    \right) - |\bfn|(|\bfn|+1)(2|\bfn|+1) \\ & \geq {n_k + 3 \choose n_k}|\bfn'|(|\bfn'|+1)(2|\bfn'|+1) - |\bfn|(|\bfn|+1)(2|\bfn|+1)
\end{align*}
We express this as univariate polynomial in $\bbQ[|\bfn'|][n_k]$: 
\begin{align*}
    &\left(\frac{1}{3}|\bfn'|^{3}+\frac{1}{2}|\bfn'|^{2}+\frac{1}{6}|\bfn'|-2\right)n_k^3 \\
    & \qquad + \left(2\,|\bfn'|^{3}+3\,|\bfn'|^{2}-5\,|\bfn'|-3\right)n_k^2 \\
     & \qquad + \left(\frac{11}{3}|\bfn'|^{3}-\frac{1}{2}|\bfn'|^{2}-\frac{25}{6}|\bfn'|-1\right)n_k.
\end{align*}
Since $2 \leq n_1 \leq n_2 \leq \cdots \leq n_{k-1}$ and $k \geq 4$, we have $|\bfn'| \geq 6$: under this condition all coefficients of the latter polynomial are negative, and hence \eqref{eq:A3_1} also holds for $\bfd = 3^k$ for any $k \geq 4$. \qedhere
\end{proof}

\bibliographystyle{alpha}
\bibliography{SVreferences.bib}

\newcommand{\etalchar}[1]{$^{#1}$}
\begin{thebibliography}{BCC{\etalchar{+}}18}

\bibitem[AB09]{AboBra09}
H.~Abo and M.C. Brambilla.
\newblock Secant varieties of {S}egre-{V}eronese varieties
  $\mathbb{P}^m\times\mathbb{P}^n$ embedded by $\mathcal{O}(1,2)$.
\newblock {\em Experiment. Math.}, 18(3):369--384, 2009.

\bibitem[AB12]{ABnewexamples}
H.~Abo and M.~C. Brambilla.
\newblock New examples of defective secant varieties of {S}egre-{V}eronese
  varieties.
\newblock {\em Collect. Math.}, 63(3):287--297, 2012.

\bibitem[AB13]{AboBra13}
H.~Abo and M.~C. Brambilla.
\newblock On the dimensions of secant varieties of {S}egre-{V}eronese
  varieties.
\newblock {\em Annali di Matematica Pura ed Applicata}, 192(1):61--92, 2013.

\bibitem[AH95]{AH}
J.~Alexander and A.~Hirschowitz.
\newblock Polynomial interpolation in several variables.
\newblock {\em Journal of Algebraic Geometry}, 4(2):201--222, 1995.

\bibitem[AMR19]{AMR}
C.~Araujo, A.~Massarenti, and R.~Rischter.
\newblock On non-secant defectivity of {S}egre-{V}eronese varieties.
\newblock {\em Trans. Amer. Math. Soc.}, 371(4):2255--2278, 2019.

\bibitem[AOP09]{abo2009induction}
H.~Abo, G.~Ottaviani, and C.~Peterson.
\newblock Induction for secant varieties of segre varieties.
\newblock {\em Transactions of the American Mathematical Society},
  361(2):767--792, 2009.

\bibitem[Bal23]{ballico}
E.~Ballico.
\newblock Partially symmetric tensors and the non-defectivity of secant
  varieties of products with a projective line as a factor.
\newblock {\em Vietnam Journal of Mathematics}, pages 1--10, 2023.

\bibitem[BBM23]{ballico2023tensoring}
E.~Ballico, A.~Bernardi, and T.~Ma{\'n}dziuk.
\newblock Tensoring by a plane maintains secant-regularity in degree at least
  two.
\newblock {\em preprint arXiv:2312.01933}, 2023.

\bibitem[BC23]{blomenhofer2023nondefectivity}
A.~Blomenhofer and A.~Casarotti.
\newblock Nondefectivity of invariant secant varieties.
\newblock {\em preprint arXiv:2312.12335}, 2023.

\bibitem[BCC11]{BCC}
A.~Bernardi, E.~Carlini, and M.~V. Catalisano.
\newblock Higher secant varieties of $\mathbb{P}^n\times \mathbb{P}^m$ embedded
  in bi-degree $(1, d)$.
\newblock {\em Journal of Pure and Applied Algebra}, 215:2853--2858, 2011.

\bibitem[BCC{\etalchar{+}}18]{bernardi2018hitchhiker}
A.~Bernardi, E.~Carlini, M.~V. Catalisano, A.~Gimigliano, and A.~Oneto.
\newblock The hitchhiker guide to: Secant varieties and tensor decomposition.
\newblock {\em Mathematics}, 6(12):314, 2018.

\bibitem[BCS13]{burgisser2013algebraic}
P.~B{\"u}rgisser, M.~Clausen, and M.~Shokrollahi.
\newblock {\em Algebraic complexity theory}, volume 315.
\newblock Springer Science \& Business Media, 2013.

\bibitem[CGG05]{CatGerGim05}
M.~V. Catalisano, A.~V. Geramita, and A.~Gimigliano.
\newblock Higher secant varieties of {S}egre-{V}eronese varieties.
\newblock In {\em Projective varieties with unexpected properties}, pages
  81--107. Walter de Gruyter, Berlin, 2005.

\bibitem[CGG08]{CatGerGim08}
M.~V. Catalisano, A.~V. Geramita, and A.~Gimigliano.
\newblock On the ideals of secant varieties to certain rational varieties.
\newblock {\em Journal of Algebra}, 319:1913--1931, 2008.

\bibitem[GO22]{GO}
F.~Galuppi and A.~Oneto.
\newblock Secant non-defectivity via collisions of fat points.
\newblock {\em Advances in Mathematics}, 409:Paper No. 108657, 58, 2022.

\bibitem[Lan11]{landsberg2011tensors}
J.~M. Landsberg.
\newblock {\em Tensors: geometry and applications: geometry and applications},
  volume 128.
\newblock American Mathematical Soc., 2011.

\bibitem[LMR22]{LMR22}
A.~Laface, A.~Massarenti, and R.~Rischter.
\newblock On secant defectiveness and identifiability of {S}egre–{V}eronese
  varieties.
\newblock {\em Rev. Mat. Iberoam.}, 38:1605--1635, 2022.

\bibitem[LP13]{P1P1P1}
A.~Laface and E.~Postinghel.
\newblock Secant varieties of {S}egre-{V}eronese embeddings of
  {$(\Bbb{P}^1)^r$}.
\newblock {\em Math. Ann.}, 356(4):1455--1470, 2013.

\bibitem[MM22]{MMidentifiability}
A.~Massarenti and M.~Mella.
\newblock Bronowski's conjecture and the identifiability of projective
  varieties.
\newblock {\em preprint arXiv:2210.13524}, 2022.

\bibitem[Ter11]{terracini1911sulle}
A.~Terracini.
\newblock Sulle $v_k$ per cui la variet{\`a} degli $s_h$ $(h+1)$ seganti ha
  dimensione minore dell'ordinario.
\newblock {\em Rendiconti del Circolo Matematico di Palermo (1884-1940)},
  31:392--396, 1911.

\bibitem[Åd88]{adlandsvik1988varieties}
B.~Ådlandsvik.
\newblock Varieties with an extremal number of degenerate higher secant
  varieties.
\newblock {\em J. Reine Angew. Math.}, 392:16--26, 1988.

\end{thebibliography}

\end{document}